\newtheorem{theorem}{Theorem}[section]
\newtheorem{proposition}[theorem]{Proposition}
\newtheorem{definition}{Definition}[section]
\newtheorem{remark}[theorem]{Remark}
\newtheorem{assumption}{Assumption}[section]
\author{Laura Lauerbach\footnote{Institute of Mathematics,
University of W\"{u}rzburg, Emil-Fischer-Str.~40, 97074 W\"{u}rzburg, Germany and Institute of Mathematics,
University of Kassel, Heinrich-Plett-Str.~40, 34132 Kassel, Germany},
Anja Schl\"{o}merkemper\footnote{Institute of Mathematics,
University of W\"{u}rzburg, Emil-Fischer-Str.~40, 97074 W\"{u}rzburg, Germany}
}
\title{Derivation of a variational model for brittle fracture from a random heterogeneous particle chain}
\begin{document}
\maketitle

\begin{abstract}
A mathematical continuum limit of the interaction energy of a random particle chain is shown to yield new insight into the effect of microscopic heterogeneities on macroscopic fracture laws in brittle materials. We derive a formula which yields that either elastic behaviour or a crack is energetically preferred. The formula explicitly shows the dependence on the boundary condition and the microstructure of the chain. The mathematical analysis is based on a variational convergence ($\Gamma$-convergence) of convex-concave potentials together with ergodic theorems which are common tools in stochastic homogenization.
\end{abstract}
\medskip
\noindent
{\bf Key Words:}  Continuum limit, discrete system, stochastic homogenization, $\Gamma$-convergence, ergodic theorems,
Lennard-Jones potentials,  
brittle fracture, heterogeneous materials, random materials. \\
\medskip

\noindent
{\bf AMS Subject Classification.}
74Q05, 49J45, 41A60, 74A45, 74G65, 74R10. 


\section{Introduction}

Fracture in brittle materials is often modeled in a variational framework, which allows to compare elastic energy contributions and contributions due to the creation of surface. As in Griffith' theory of fracture, onset of fracture is predicted by a formula involving elastic material constants and surface energy contributions. As was pointed out by Griffith \cite{Griffith2021}, microscopic flaws have an important impact on the yield stress that causes brittle fracture. 

In this article we investigate the effect of microscopic heterogeneities on the onset of fracture in the setting of a one-dimensional toy model, which consists of particles that interact through some convex-concave potential with constant energy at large distances between the particles. We strive for considering a large class of interaction potentials, which includes potentials of convex-concave shape with constant energy in the limit of large particle distance and singular behaviour at zero distance. Typical examples include Lennard-Jones potentials. Further, we allow for defects, weak interaction potentials or composite materials. The aim is to understand the effect of such heterogeneities on the effective behaviour as the number of particles tends to infinity. The passage from the discrete/microscopic to a continuous/macroscopic system is performed in the context of $\Gamma$-convergence, a notion of variational convergence which is suitable for minimization problems of energy functionals depending on a parameter like the number of particles. For an introduction to $\Gamma$-convergence and related literature we refer to \cite{Lauerbach-Diss}, which is the first author's PhD thesis upon which this article is based. In fact, large parts of this article are identical to \cite[Chapter~5]{Lauerbach-Diss}.

The results presented in this work for random heterogeneous particle chains extend earlier results in a periodic setting \cite{LauerbachSchaeffnerSchloemerkemper2017}, to which we also refer for an overview of related results in the homogeneous setting, cf.\ also the end of Section~\ref{Sec:GammaLimitRescaled}.

In the next section we will introduce the random heterogeneous particle chain and the class of interaction potentials in detail. Further we introduce the variational model, i.e.\ the energy functional that summarizes all the interaction potentials between the nearest neighbours. Inspired by earlier work \cite{BraidesLewOrtiz2006} 
we consider a rescaled version of the energy which ensures the same scaling of surface and bulk contributions to the energy and thus allows to obtain a comparison of fracture and elastic bulk energy in the continuum limit. 

In Section~\ref{sec:3} we state our main result (Theorem~\ref{Thm:rescaled}) and an accompanying compactness result; the proofs are provided in Section~\ref{sec:proofs}. The obtained $\Gamma$-convergence result yields that the limiting energy is finite whenever the displacement satisfies certain regularity and boundary conditions and has at most finitely many jumps which are increasing. The limiting energy has two terms: the first term represents a linear elastic energy with an elastic modulus that is obtained by the inverse of the expectation of the inverse of the elastic modulus of the heterogeneous interaction potentials at the minimizers; the second term reflects the energy needed to create a crack, i.e.\ a jump of the displacement vector. The second term, which can be interpreted as a (zero-dimensional) surface energy, is shown to be determined by the weakest interaction potential of the random heterogeneous particle chain. This mathematically proved result coincides with the physical intuition that the weakest bond determines the onset of fracture.

\section{Discrete model -- stochastic Lennard-Jones type interactions} 

\begin{figure}[t]
\centering
		\includegraphics[width=0.6\linewidth]{./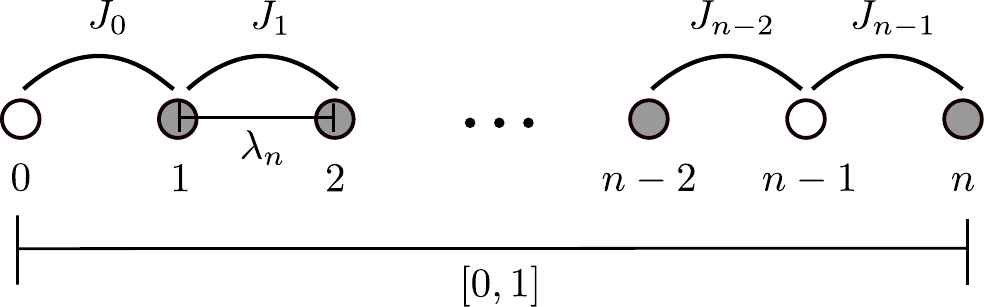}
		\caption{Chain of $n+1$ randomly distributed particles with reference position $x_n^{i}=i\lambda_n$. The potentials $J_i$ describe the nearest neighbour interaction of particle $i$ and $i+1$. The characteristic length scale is $\lambda_n=\frac{1}{n}$ and the interval is $[0,1]$.}
\label{fig:kette}
\end{figure}

Let $\lambda_n\mathbb{Z}\cap[0,1]$ be a one dimensional lattice, where $\lambda_n=\frac{1}{n}$. We regard this as a chain of $n+1$ particles, see Figure~\ref{fig:kette}. The reference position of the $i$-th particle is referred to as $x_n^i:=i\lambda_n$ and the deformation of the particles is denoted by $u_n: \lambda_n\mathbb{Z}\cap[0,1]\rightarrow\mathbb{R}$, where we write $u(x_n^i)=u^i$ for short. In the passage from discrete systems to their continuous counterparts  piecewise affine interpolations of the deformation $u$ are used. We define 
\begin{align*}
	\mathcal{A}_n:=\left\lbrace u \in C([0,1]): u\ \text{is affine on}\ (i,i+1)\lambda_n,\ i\in\{0,1,\ldots,n-1\} \right\rbrace
\end{align*}
as the set of all piecewise affine functions which are continuous. The interaction potentials between the particles of this chain can be quite general in our analysis. Moreover these potentials can be random due to e.g.\ a heterogeneous distribution of different particles, see below.


We consider interaction potentials belonging to a large class $\mathcal{J}(\alpha,b,c,d,\Psi,\eta)$ of functions that includes in particular the classical Lennard-Jones potential, which is the reason why we refer to the considered interaction potentials as being of \textit{Lennard-Jones type}, cf.\ also \cite{Lauerbach-Diss,unserPaper1}. It is defined as follows.
\begin{definition}
 Fix $\alpha\in(0,1]$, $b>0$, $c>0$, $d\in(1,+\infty)$, $\eta>0$ and a convex function $\Psi:\mathbb R\to[0,+\infty]$ satisfying
 \begin{equation}
 \label{domrandpsi}
  \lim_{z\to0^+} \Psi(z)=+\infty.
 \end{equation} 
We denote by $\mathcal J=\mathcal J(\alpha,b,c,d,\Psi,\eta)$ the class of functions $J:\mathbb R\to \mathbb R\cup\{+\infty\}$ which satisfy the following properties:
\begin{itemize}
	\item[(LJ1)] (Regularity and asymptotic decay) It is $J\in C^3$ on its domain and
	\begin{align*}
	 \lim_{z\to0^+} J(z)=\infty\quad \mbox{as well as}\quad J(z)=\infty\mbox{ for $z\leq0$.}
	\end{align*}
	\item[(LJ2)] (Convex bound, minimum and minimizer) $J$ has a unique minimizer $\delta$ with $\delta\in (\tfrac1d,d)$ and $J(\delta)<0$, and is strictly convex on $(0,\delta)$. Moreover, $\|J\|_{L^\infty(\delta,\infty)}<b$ and it holds	\begin{align*}
		\tfrac1d\Psi(z)-d\leq J(z)\leq d\max\{\Psi(z),|z|\}\quad\text{for all}\ z\in(0,+\infty).
	\end{align*}
	\item[(LJ3)] (Asymptotic behaviour) It holds
	\begin{align*}
	 \lim_{z\to\infty} J(z)=0.
	\end{align*}
	\item[(LJ4)] (Harmonic approximation near ground state) For $|z-\delta|<\eta$, it holds true that
	\begin{align*}
	J(z)-J(\delta)\geq \frac{1}{c}(z-\delta)^2.
	\end{align*}
\end{itemize}
\end{definition}

\begin{remark}
	\label{Rm:alphaneu}
	\begin{itemize}
		\item[(i)] The regularity condition in (LJ1) is not sharp. In principle, it would suffice to require $J\in C^2$. However, the $C^3$ regularity allows to use the Lagrange form of the remainder in a Taylor-expansion in the proof of Theorem \ref{Thm:rescaled}, which is more convenient, cf.\ also \eqref{defCkappa}. Moreover, hypothesis (R1) (see below) would be more difficult to formulate if we just assumed $J\in C^2$. 
		\item[(ii)] A harmonic approximation, like in (LJ4), of a function $f\in C^2$ at the minimum point $x_0$ is always possible as long as it holds true that $f''(x_0)>0$. 
		
		\item[(iii)] With the definition 
		\begin{align*}
		\alpha:=\left.\dfrac12\dfrac{\partial^2J(z)}{\partial z^2}\right|_{z=\delta},
		\end{align*}
		it follows from (LJ4) and (ii) that $\alpha>C_\alpha$ for a constant $C_{\alpha}>0$ uniformly for all potentials in the class $\mathcal{J}(\alpha,b,c,d,\Psi,\eta)$. 
		
		\item[(iv)] The assumption in (LJ4) contains a uniform bound to handle the situation of infinitely many potentials and is needed for the stochastic setting. For finitely many different potentials, (LJ4) is fulfilled automatically.
	\end{itemize}
\end{remark}
 
For further remarks on the above assumptions and for examples of Lennard-Jones type potentials we refer to \cite{unserPaper1}. 
\medskip 

The randomness in our model enters through random nearest neighbour interactions. The random interaction potentials $\{J(\omega,i,\cdot)\}_{i\in\mathbb{Z}}$, $J(\omega,i,\cdot):\mathbb{R}\to(-\infty,+\infty]$, are of Lennard-Jones type, as defined above; they are assumed statistically homogeneous and ergodic, which is a standard assumption in the context of stochastic homogenization theory. To this end, let $(\Omega,\mathcal{F},\mathbb{P})$ be a probability space, which can be discrete or continuous with uncountably many different elements in the set $\Omega$. We assume that the family $(\tau_i)_{i\in\mathbb{Z}}$ of measurable mappings $\tau_i:\Omega\rightarrow\Omega$ is an additive group action, i.e., $\tau_0\omega=\omega$ for all $\omega\in\Omega$ and $\tau_{i_1+i_2}=\tau_{i_1}\tau_{i_2}$ for all $i_1,i_2\in\mathbb{Z}$. 
Additionally, we require that the group action is measure preserving, that is $\mathbb{P}(\tau_iB)=\mathbb{P}(B)$ for every $B\in\mathcal{F}$, $i\in\mathbb{Z}$ (stationarity). Moreover, we assume ergodicity, i.e., for all $B\in\mathcal{F}$, it holds $(\tau_i(B)=B\ \forall i\in\mathbb{Z})\Rightarrow\mathbb{P}(B)=0\ \text{or}\ \mathbb{P}(B)=1$.

\medskip

We define $\tilde{J}:\Omega\to\mathcal{J}(\alpha,b,c,d,\Psi,\eta),\ \omega\mapsto \tilde{J}(\omega)(\cdot)=:\tilde{J}(\omega,\cdot)$, measurable in $\omega$. 
Then, we set
\begin{align*}
	J(\omega,i,\cdot) :=\tilde{J}(\tau_{i}\omega,\cdot)\quad\text{ for all } i\in\mathbb{Z},\ \omega\in\Omega
\end{align*}
as the random interaction potential between particles $i$ and $i+1$.
Hence every mapping $\tau_i:\Omega\to\Omega$ of the group action is assigned to a particle of the chain and is used to relate the different particles to different elements of the sample space and therefore to different interaction potentials. 
In the following, we denote $\tilde{J}$ simply by $J$, for better readability. We also introduce some notation for the minimizers
\begin{align*}
\delta(\omega):=\mathrm{argmin}_{z\in\mathbb{R}}\left\{\tilde{J}(\omega,z) \right\}, \quad \delta(\tau_i\omega):=\mathrm{argmin}_{z\in\mathbb{R}}\left\{J(\omega,i,z) \right\}, \quad \text{for all}\ i\in\mathbb{Z}.
\end{align*}
Further, we set for all $\omega\in\Omega$
\begin{align} \label{alphaomega}
\alpha(\omega)&:=\left.\dfrac12\dfrac{\partial^2J(\omega,z)}{\partial z^2}\right|_{z=\delta(\omega)},
\end{align}
and for $0<\kappa<\frac{1}{d}$ 
\begin{align} \label{defCkappa}
C^{\kappa}(\omega)&:=\sup\left\{\left|\frac{\partial^3 J}{\partial z^3}(\omega,z)\right|\, :\, z\in\left[\delta(\omega)-\kappa,\delta(\omega)+\kappa\right] \right\}.
\end{align}
Since $\frac{\partial^3 J}{\partial z^3}(\omega,\cdot)$ is continuous due to (LJ1), $C^{\kappa}(\omega)<\infty$ holds true for every $\omega\in\Omega$ and every $0<\kappa<\frac{1}{d}$.

\begin{remark}
	\label{Rm:integrabilityandexpectationvalues2}
	
	Due to Remark~\ref{Rm:alphaneu} (iii), it holds true that $0<(\alpha(\omega))^{-1}<C$ and this implies integrability of the random variable $(\alpha(\omega))^{-1}$ and therefore the expectation value of $(\alpha(\omega))^{-1}$ exists. It is denote by $\mathbb{E}[\alpha^{-1}]$. 
\end{remark}

The assumptions on the stochastic setting of the chain with Lennard-Jones type interaction potentials are summarized in

\begin{assumption}
	\label{Ass:stochasticLJ}
	Fix $\alpha\in(0,1]$, $b>0$, $c>0$, $d\in(1,\infty)$, $\eta>0$ and  a convex function $\Psi:\mathbb R\to[0,\infty]$ satisfying \eqref{domrandpsi}. Let $(\Omega,\mathcal{F},\mathbb{P})$ be a probability space and $(\tau_i)_{i\in\mathbb{Z}}$ be a family of stationary and ergodic group actions. The random variable $J:\Omega\to\mathcal J(\alpha,b,c,d,\Psi,\eta)$ is measurable and satisfies
\begin{itemize}
	\item[(R1)] (Third derivative near ground state) There exists $0<\kappa^*<\frac{1}{d}$ such that $\mathbb{E}[C^{\kappa^*}]<\infty$. As a direct consequence, it also holds true that $\mathbb{E}[C^{\kappa}]<\infty$ for every $\kappa<\kappa^*$, by definition of $C^{\kappa}$, see \eqref{defCkappa}.
	\item[(R2)] (Uniform convergence of the asymptotic decay) It holds true that
	\begin{align*}
	\lim\limits_{z\to\infty}\max_{\omega\in\Omega}\left|J(\omega,z) \right|=0.
	\end{align*}
\end{itemize}
\end{assumption}

We remark that the assumptions (R1) and (R2) are automatically satisfied when dealing with finitely many different potentials. \medskip

Next we introduce the energy functional considered. For a given piecewise affine deformation $u\in\mathcal{A}_n$ the canonical energy of the chain of particles with nearest neighbour interactions reads
\begin{align*}
	E_n(\omega,u):=\sum_{i=0}^{n-1}\lambda_nJ\left(\omega,i,\dfrac{u^{i+1}-u^{i}}{\lambda_n}\right)
\end{align*}
accompanied by the boundary conditions $u(0)=0$, $u(1)=\ell$ for some given $\ell>0$.
\medskip

As pointed out earlier, we consider here a rescaled version of the energy which ensures that bulk and surface contributions scale in the same way in order to overcome a separation of scales, cf.\ \cite{BraidesLewOrtiz2006,BraidesTruskinovsky2008,ScardiaSchloemerkemperZanini2012} for the deterministic setting. 
In the random setting we transform the deformation $u$ into a properly rescaled displacement $v$ as follows:
\begin{align*}
v^{i}:=\dfrac{u^{i}-\sum_{k=0}^{i-1}\lambda_n\delta(\tau_k\omega)}{\sqrt{\lambda_n}}\quad\text{for all}\ i\in\{0,\ldots,n\}.
\end{align*}
Hence, we have in particular that $v$ is piecewise affine and
\begin{align*}
\dfrac{u^{i+1}-u^{i}}{\lambda_n}=\dfrac{v^{i+1}-v^{i}}{\sqrt{\lambda_n}}+\delta(\tau_i\omega).
\end{align*}
The rescaled variable shifts the minimizer of the potential $J(\omega,\cdot)$ to the position $v'=0$. Further, a constant term is added to the potential, which results in the final rescaled energy 
\begin{align*}
E_n(\omega,v):=\sum_{i=0}^{n-1}\left(J\left(\tau_i\omega,\dfrac{v^{i+1}-v^{i}}{\sqrt{\lambda_n}}+\delta(\tau_i\omega)\right)-J\left(\tau_i\omega,\delta(\tau_i\omega)\right) \right).
\end{align*}
 
Next we adapt the Dirichlet boundary conditions $u(0)=0$ and $u(1)=\ell$.  Following the ideas of \cite{ScardiaSchloemerkemperZanini2012}, adjusted to our stochastic setting, we focus on some sequence $(\ell_n)\subset\mathbb{R}$ with $\ell_n\rightarrow\mathbb{E}[\delta]$, satisfying $\ell_n>\mathbb{E}[\delta]$ for every $n\in\mathbb{N}$ and 
\begin{align}
\label{bedingunggamma}
\gamma_n:=\dfrac{\ell_n-\sum_{k=0}^{n-1}\lambda_n\delta(\tau_k\omega)}{\sqrt{\lambda_n}}\rightarrow\gamma,
\end{align}
for some $\gamma\in\mathbb{R}$. The new Dirichlet boundary conditions then read $v(0)=0$ and $v(1)=\gamma_n$. For definiteness, we assume that
\begin{align*}
\ell_n&>\frac{1}{n}\sum_{i=0}^{n-1}\delta(\tau_i\omega)\quad \text{for every}\ n\in\mathbb{N}.
\end{align*}
Hence we have that $\gamma\geq0$ and $\gamma_n>0$ for all $n\in\mathbb{N}$. Taking the boundary conditions into account, we obtain 
 the rescaled energy functional $E_n^{\gamma_n}:\Omega\times L^1(0,1)\rightarrow(-\infty,+\infty]$ with
\begin{align*}
E_n^{\gamma_n}(\omega,v):=\begin{cases}
E_n(\omega,v)&\quad\text{for}\ v\in\mathcal{A}_n^{\gamma_n}(0,1), v(0)=0,v(1)=\gamma_n,\\
+\infty&\quad\text{else}.
\end{cases}
\end{align*}
Next we assert the $\Gamma$-limit result of this energy as the number of particles tends to $\infty$.

\section{Continuum limit of the rescaled energy -- the main result} \label{sec:3}

 We briefly recall notation related to the space $SBV$. For given $\gamma>0$, we denote by $SBV^\gamma(0,1)$ the space of special functions of bounded variations in $(0,1)$ with additional boundary constraint $v(0-)=0$ and $v(1+)=\gamma$. For $v\in SBV^\gamma(0,1)$, we set $S_v:=\{x\in[0,1]\,|\, [v](x)\neq0\}$ with $[v](x):=v(x+)-v(x-)$ where $v(x+)$ for $x\in[0,1)$ and $v(x-)$ for $x\in(0,1]$ are the right and left essential limits at $x$.

At first, we state a compactness result for functions with equibounded energy, which ensures the convergence of minimizers in the sense of the main theorem of $\Gamma$-convergence. We provide its proof in Section~\ref{sec:4.1}.
\begin{theorem}
	\label{Thm:compactnessgammarescaled}
	Let Assumption \ref{Ass:stochasticLJ} be satisfied. Let $\gamma_n$ be such that \eqref{bedingunggamma} holds true and let $(v_n)$ be a sequence of functions such that
	$\sup_nE_n^{\gamma_n}(\omega,v_n)<+\infty$
	for every $\omega\in\Omega$. Then, there exist a subsequence $(v_{n_k})$ and $v\in SBV^{\gamma}(0,1)$ such that $v_{n_k}\rightarrow v$ in $L^1(0,1)$  and
	\begin{align*}
	v'\in L^2(0,1),\quad \#S_v<+\infty,\quad[v]\geq 0\ in\ [0,1].
	\end{align*}
	Moreover, there exists a finite set $S\subset[0,1]$ such that $v_{n_k}\rightharpoonup v$ locally weakly in $H^1((0,1)\setminus S)$.
\end{theorem}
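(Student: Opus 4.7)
My approach is to split each discrete configuration into an elastic part, governed by the quadratic bound (LJ4), and a complementary fracture part whose every bond carries a definite amount of energy. Writing
\[
w_i^n := \sqrt{\lambda_n}\,(v_n)'_i + \delta(\tau_i\omega) = \frac{u_n^{i+1}-u_n^i}{\lambda_n}
\]
for the physical strain, I partition $\{0,\dots,n-1\}$ into $I_n^e(\omega) := \{i : |w_i^n - \delta(\tau_i\omega)| \leq \eta\}$ and its complement $I_n^j(\omega)$. The elastic set will produce an $L^2$-bound on $(v_n)'$, while the fracture set will have uniformly bounded cardinality. Together with the boundary constraint $v_n(0)=0$, $v_n(1)=\gamma_n\to\gamma$, these ingredients feed into Ambrosio's SBV compactness theorem to yield the limit $v\in SBV^\gamma(0,1)$.

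On $I_n^e(\omega)$ I apply (LJ4) bond by bond to get
\[
J(\tau_i\omega, w_i^n) - J(\tau_i\omega, \delta(\tau_i\omega)) \geq \tfrac{1}{c}(w_i^n - \delta(\tau_i\omega))^2 = \tfrac{\lambda_n}{c}\bigl((v_n)'_i\bigr)^2,
\]
and summing gives $\int_{A_n^e}((v_n)')^2\,\diff x \leq c\,E_n^{\gamma_n}(\omega, v_n) \leq cC$ with $A_n^e := \bigcup_{i\in I_n^e}(i\lambda_n,(i+1)\lambda_n)$. For compressive fracture bonds ($w_i^n < \delta(\tau_i\omega)-\eta$), strict convexity of $J(\tau_i\omega,\cdot)$ on $(0,\delta(\tau_i\omega))$ from (LJ2) combined with (LJ4) immediately forces $J(\tau_i\omega,w_i^n)-J(\tau_i\omega,\delta(\tau_i\omega))\geq \eta^2/c$. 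For tensile fracture bonds ($w_i^n > \delta(\tau_i\omega)+\eta$), I use (R2) to select a universal $R$ with $|J(\omega,z)|<\eta^2/(4c)$ for $z>R$, and combine this with the $L^\infty$-bound $\|J\|_{L^\infty(\delta,\infty)}<b$ and the harmonic lower bound (via Remark~\ref{Rm:alphaneu}(iii)) to extract a universal $\beta>0$ such that each tensile bond whose discrete difference $v_n^{i+1}-v_n^i=\sqrt{\lambda_n}(w_i^n-\delta(\tau_i\omega))$ is of order one contributes at least $\beta$ to the energy. Intermediate tensile bonds with $w_i^n\in(\delta(\tau_i\omega)+\eta, R]$ produce only infinitesimal differences of size $O(\sqrt{\lambda_n})$, and are absorbed into the diffuse gradient of the limit. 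Consequently the number of macroscopic fracture bonds is bounded by $C/\beta$ independently of $n$.

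With these two estimates I construct an auxiliary sequence $\tilde v_n\in SBV^{\gamma_n}(0,1)$ that agrees with $v_n$ outside the macroscopic fracture bonds and concentrates each such bond difference into a genuine jump; since the total measure of those intervals is $O(\lambda_n)$, one has $\|v_n-\tilde v_n\|_{L^1}\to 0$. The diffuse-gradient bound and the jump-cardinality bound place $(\tilde v_n)$ in a set to which Ambrosio's SBV compactness theorem applies, producing a subsequence $\tilde v_{n_k}\to v$ in $L^1(0,1)$ with $v\in SBV(0,1)$, $v'\in L^2(0,1)$ and $\#S_v<\infty$. Hence $v_{n_k}\to v$ in $L^1(0,1)$ as well, and the boundary conditions pass to the limit because $\gamma_n\to\gamma$, placing $v\in SBV^\gamma(0,1)$. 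The non-negativity $[v]\geq 0$ follows from the non-penetrability $w_i^n>0$ (due to $J=+\infty$ on $(-\infty,0]$), which forces $v_n^{i+1}-v_n^i\geq -\sqrt{\lambda_n}\,\delta(\tau_i\omega)\to 0$. Finally, on any open subset of $(0,1)$ disjoint from a fixed neighbourhood of the macroscopic jump points, $(v_n)'$ is bounded in $L^2$, giving $v_{n_k}\rightharpoonup v$ weakly in $H^1$ away from a finite set $S$. The chief technical hurdle is the uniformity of the positive lower bound $\beta$ on the tensile-jump cost, which requires a delicate combination of (R2) with the convex bounds (LJ2) and (LJ4) to rule out arbitrarily shallow interaction wells within the class.
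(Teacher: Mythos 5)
Your strategy mirrors the paper's at the structural level: split bonds into elastic ones controlled by the harmonic bound (LJ4) and fracture ones that carry a fixed amount of energy (hence uniformly many); build a modified sequence $\tilde v_n\in SBV$ that concentrates the fracture bonds into jumps; apply the SBV closure theorem; use non-penetrability ($J=+\infty$ on $(-\infty,0]$ together with $\delta\leq d$) to control negative jumps; and localise away from the accumulation points of the jumps to get weak $H^1$ convergence. These are exactly the ingredients of the paper's Steps~1--4, and your key observations (the bound $(v_n')^-\leq d/\sqrt{\lambda_n}$, the fixed energy cost per fracture bond, the concentration construction $\tilde v_n$) match the paper's. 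However, two parts of your sketch are not yet proofs and are handled with real care in the paper.

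First, the $BV$ bound needed for SBV compactness. The closure theorem you invoke does not take as input merely an $L^2$ bound on the absolutely continuous derivative and a bound on $\#S_{\tilde v_n}$; it also needs $\sup_n\|\tilde v_n\|_{BV(0,1)}<\infty$, i.e.\ $\tilde v_n\rightharpoonup^* v$ in $BV$. The paper's Step~1 derives $\sup_n\|v_n\|_{W^{1,1}(0,1)}<\infty$ \emph{before} introducing $\tilde v_n$, by combining the pointwise lower bound \eqref{abschaetzungM} on the gradient with the finiteness of the set $I_n^{--}$ of large negative bonds, a H\"older estimate, and the boundary constraint $v_n(1)=\gamma_n$; this controls $\|(v_n')^-\|_{L^1}$ and hence $\|v_n'\|_{L^1}$. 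In your version the intermediate tensile bonds with $w_i^n\in(\delta+\eta,R]$ could a priori contribute order $n\cdot\sqrt{\lambda_n}\sim\sqrt{n}$ to the total variation unless you first show each such bond pays at least a fixed energy; asserting that they are ``absorbed into the diffuse gradient'' is not a bound. You need to close this loop and produce the $BV$ (or $W^{1,1}$) estimate explicitly.

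Second, $[v]\geq 0$. Observing $v_n^{i+1}-v_n^i\geq -\sqrt{\lambda_n}\,\delta(\tau_i\omega)\to 0$ is the right starting point, but it does not by itself yield non-negativity of the jump part of the weak-$*$ limit. The paper's Step~4 uses a refined lower bound $\varphi$ with distinct truncation heights $D_2$ for positive arguments and $D_3>D_1d^2$ for negative arguments, proves the set $\tilde I_n^-$ of large negative bonds is \emph{empty} (not just asymptotically small) via \eqref{abschaetzungM}, constructs an auxiliary sequence $\hat v_n$ whose jump measure is non-negative by construction, extends it by constants to an interval $(a,b)\supset[0,1]$ to capture possible boundary jumps, and only then invokes the weak-$*$ convergence $D^j\hat v_n\rightharpoonup^* D^j v$ from the SBV closure theorem. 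Your argument should either reproduce this construction or carefully show that the negative part of $D^j\tilde v_n$ vanishes in total variation (including boundary jumps) and that this suffices for $D^j v\geq 0$.
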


For convenience, the properties shown in the compactness result are collected in the definition 
\begin{align*}
SBV_c^{\gamma}(0,1):=\left\{v\in SBV^{\gamma}(0,1)\, :\,v'\in L^2(0,1),\ \#S_v<+\infty,\ [v]\geq0\ \text{in}\ [0,1] \right\}.
\end{align*}
We are now in a position to state the main result of this article; it is proved in Section~\ref{Sec:GammaLimitRescaled}.

\begin{theorem}
	\label{Thm:rescaled}
	Let Assumption \ref{Ass:stochasticLJ} be satisfied and let $\gamma_n$ be such that \eqref{bedingunggamma} holds true. Then, there exists an $\Omega'\subset\Omega$ with $\mathbb{P}(\Omega')=1$ such that for all $\omega\in\Omega'$ the sequence $(E_n^{\gamma_n})$ $\Gamma$-converges with respect to the $L^1(0,1)$-topology to the functional $E^{\gamma}$ given by
	\begin{align*}
	E^{\gamma}(v):=\begin{cases}
	\underline{\alpha}\displaystyle \int_{0}^{1}\left|v'(x) \right|^2\,\mathrm{d}x+\beta\#S_v&\quad\text{if}\ v\in SBV_c^{\gamma}(0,1),\\[1mm]
	+\infty&\quad\text{otherwise,}
	\end{cases}
	\end{align*}
	where 
	$\underline{\alpha}:=\left(\mathbb{E}\left[\alpha^{-1}\right]\right)^{-1}$ 
	with $\alpha(\omega)$ as in \eqref{alphaomega},
	and $\beta:=\inf\left\{-J(\omega,\delta(\omega))\, :\, \omega\in\Omega\right\}.$
	Moreover, for $\gamma>0$ it holds true that
	\begin{align*}
	\lim\limits_{n\rightarrow\infty}\inf_v E_n^{\gamma_n}(\omega,v)=\min_vE^{\gamma}(v)=\min\left\{\underline{\alpha}\gamma^2,\beta\right\}.
	\end{align*}
\end{theorem}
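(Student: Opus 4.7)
The proof follows the three-step $\Gamma$-convergence pattern. Theorem~\ref{Thm:compactnessgammarescaled} provides the equi-coercivity which restricts limits to $SBV_c^{\gamma}(0,1)$ and which, combined with the liminf/limsup inequalities, delivers the convergence of minima via the fundamental theorem of $\Gamma$-convergence. The identification of the minimum value is elementary: for $v\in SBV_c^{\gamma}$ with $k$ jumps of total non-negative mass $s\in[0,\gamma]$, Jensen's inequality applied to the convex integrand $\underline{\alpha}|\cdot|^2$ gives $E^{\gamma}(v)\geq\underline{\alpha}(\gamma-s)^2+k\beta$, and the infimum over admissible $(k,s)$ is attained either at $(0,0)$ with value $\underline{\alpha}\gamma^2$ or at $(1,\gamma)$ with value $\beta$, dominating every other choice.

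For the $\Gamma$-liminf, fix $\omega$ in a full-measure set $\Omega'$ on which Birkhoff's ergodic theorem applies to the integrable random variables $\alpha^{-1}$ (Remark~\ref{Rm:integrabilityandexpectationvalues2}), $C^{\kappa^*}$ (via (R1)), $\delta$, and the indicators of the sets $A_\varepsilon$ defined below. Let $v_n\to v$ in $L^1(0,1)$ with $\sup_n E_n^{\gamma_n}(\omega,v_n)<\infty$ and set $z_i^n:=(v_n^{i+1}-v_n^i)/\sqrt{\lambda_n}$. Split the bonds into $I^{\mathrm{br}}_n:=\{i:|z_i^n|>M\}$ (with $M$ so large that (R2) yields $J(\tau_i\omega,\delta(\tau_i\omega)+z)\geq-\varepsilon$ uniformly in $\omega$ when $|z|>M$) and its complement $I^{\mathrm{el}}_n$. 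Each broken bond contributes at least $\beta-\varepsilon$, and Theorem~\ref{Thm:compactnessgammarescaled} forces $\#I^{\mathrm{br}}_n\geq\#S_v$ for $n$ large, giving the surface bound $(\beta-\varepsilon)\#S_v$. On the subset of $I^{\mathrm{el}}_n$ with $|z_i^n|\leq\kappa^*$, a Taylor expansion at $\delta(\tau_i\omega)$ using (LJ1), (LJ4) and (R1) yields
\begin{equation*}
J(\tau_i\omega,\delta(\tau_i\omega)+z_i^n)-J(\tau_i\omega,\delta(\tau_i\omega))\;\geq\;\alpha(\tau_i\omega)(z_i^n)^2-C^{\kappa^*}(\tau_i\omega)|z_i^n|^3.
\end{equation*}
Since $(z_i^n)^2=\lambda_n(v_n')^2$ on the $i$-th interval, the quadratic sum rewrites as $\int_0^1\alpha(\tau_{\lfloor x/\lambda_n\rfloor}\omega)(v_n'(x))^2\diff x$, while the cubic remainder is $o(1)$ by Cauchy--Schwarz together with the $\ell^\infty$ bound $|z_i^n|\leq\kappa^*$ and Birkhoff applied to $C^{\kappa^*}$. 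The $\Gamma$-liminf of the surviving random quadratic form is $\underline{\alpha}\int_0^1|v'|^2\diff x$: this is standard one-dimensional stochastic homogenization of a convex integrand, the harmonic mean $(\mathbb{E}[\alpha^{-1}])^{-1}$ arising through the duality inequality $au^2\geq 2pu-p^2/a$ combined with the Birkhoff ergodic theorem applied to $\alpha^{-1}$, and sending $\varepsilon\to 0$ closes the liminf.

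For the $\Gamma$-limsup I first reduce, by a density/diagonal argument, to $v\in SBV_c^{\gamma}$ with piecewise-constant $v'$ and finitely many jumps of positive sizes $s_j$ at fixed points $x^j$. Away from the jumps, I set $z_i^n\propto 1/\alpha(\tau_i\omega)$ on each bulk sub-interval, calibrating the proportionality constant on that interval so that the telescoped sum reproduces the prescribed endpoint values; Birkhoff then shows that this ``series ansatz'' reaches $\underline{\alpha}\int|v'|^2\diff x$ in the limit, consistently with the lower bound. Near each jump point I exploit the definition $\beta=\inf_\omega\{-J(\omega,\delta(\omega))\}$ together with ergodicity: for every $\varepsilon>0$ the event $A_\varepsilon:=\{\omega:-J(\omega,\delta(\omega))<\beta+\varepsilon\}$ has positive probability, so by Birkhoff the asymptotic density of indices $i$ with $\tau_i\omega\in A_\varepsilon$ equals $\mathbb{P}(A_\varepsilon)>0$; this lets me pick an index $i_n^j$ within $o(1)$ of $x^j$ and concentrate the entire jump $s_j$ across that single bond. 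Because $s_j/\sqrt{\lambda_n}\to+\infty$, (R2) makes that bond contribute at most $\beta+2\varepsilon$ for $n$ large, and a diagonal argument in $\varepsilon$ completes the limsup.

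The main obstacle I expect is matching the Dirichlet datum $v(1)=\gamma_n$ exactly in the recovery sequence: once the ``weak'' jump bonds $i_n^j$ and the harmonic-mean bulk slopes are prescribed, the telescoped total must equal $\ell_n$, and this requires absorbing the $o(1)$ mismatch coming from $\gamma_n\to\gamma$ and from the ergodic approximation of $\frac{1}{n}\sum_k\delta(\tau_k\omega)$ by $\mathbb{E}[\delta]$. I plan to distribute this mismatch as a uniform correction of order $O(n^{-1/2})$ to the bulk slopes and to use (LJ4) together with the uniform integrability of $\alpha^{-1}$ to check that the resulting energy perturbation vanishes in the limit.
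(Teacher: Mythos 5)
Your proposal is correct and follows the same three-step $\Gamma$-convergence architecture as the paper: compactness, liminf, limsup (reduced by density to a nice class), and identification of the minimum; the key ideas -- Taylor expansion controlled by Birkhoff and \textup{(R1)}, bond counting near jumps, the series ansatz $z_i^n\propto 1/\alpha(\tau_i\omega)$ for the bulk recovery sequence, and placement of the jump on an asymptotically weakest bond via ergodicity -- all match the paper's. The one genuinely different technical route is your treatment of the elastic liminf: you pass through the Fenchel-type duality $au^2\geq 2pu-p^2/a$ and Birkhoff on $\alpha^{-1}$, whereas the paper introduces a coarse-grained grid of mesoscopic cells, solves the discrete constrained minimization on each cell in closed form (which is where the harmonic mean appears), and then invokes weak lower semicontinuity. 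Both routes are standard and equivalent; yours is arguably leaner, but you still need to fix a countable dense family of test fields $p$ so Birkhoff applies on a single $\Omega'$ of full measure. A few places where you are looser than the paper and would need to fill in: (i) the assertion that the compactness theorem ``forces'' $\#I_n^{\mathrm{br}}\geq\#S_v$ is really the contradiction argument of \cite{BraidesLewOrtiz2006} producing, for each jump point, a sequence of bonds with diverging rescaled slope, and should be spelled out; (ii) the ergodic step locating a weak bond must work for \emph{every real} jump location $x^j$ on a single full-measure $\Omega'$, which is exactly the content of the paper's Proposition~\ref{Prop:ergodicbeta} (extension from rational $x$, $\epsilon$ to real ones) rather than plain Birkhoff; (iii) the weak bond is found in a \emph{fixed} $\epsilon$-window around $x^j$, not within $o(1)$, and the $\epsilon$- and $\mu$-limits are sent to zero after $n\to\infty$ via the Attouch diagonalization, which the paper applies explicitly. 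Finally, your plan for the boundary mismatch is different from the paper's: you distribute the $O(\gamma_n-\gamma)$ discrepancy over the bulk slopes, while the paper absorbs it into the jump increment at the weak bond; both close the argument because the jump bond's energy tends to $\beta$ regardless of the exact jump size.
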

\vspace{1mm}
Hence, depending on the parameters $\underline{\alpha}$, $\beta$ and $\gamma$ the minimizer of the system either shows elastic behaviour or one crack. While $\underline{\alpha}$ is given in terms of an expectation value, the constant $\beta$ of the fracture part of the energy is not. The infimum in the definition of $\beta$ yields that the energy needed to create a crack is determined by the weakest bond of the chain. The location of a crack, however, remains unknown, which is in line with the proof of the $\Gamma$-limit result. More precisely, the construction of the recovery sequence in the limsup inequality is based on studying certain sequences of weakest bonds \eqref{hnepsilon}. Due to the ergodic theorem this construction can be performed for any position of the crack with probability one and yields the term $\beta$ in the continuum limit, cf.\ \eqref{limitbetan}.

\medskip

The periodic setting can be considered as a special case of the random setting. We remark that then the elastic constant $\underline{\alpha}$ is given as the harmonic mean of $\alpha_i:=\frac{1}{2}J_i''(\delta_i)$, i.e.\ of the quadratic term in the Taylor expansion, which is consistent with the corresponding result in \cite{LauerbachSchaeffnerSchloemerkemper2017}.	In \cite{BraidesGelli2006}, where a periodic setting with truncated parabolas is considered, the prefactor of the quadratic energy replaces the coefficient $\alpha_i$ of the Taylor series. The elastic constant $\underline{\alpha}$ thus is also its harmonic mean. 
In the periodic setting, the infimum in the formular for $\beta$ becomes a minimium, cf.~\cite{LauerbachSchaeffnerSchloemerkemper2017}. This is also in accordance with \cite{BraidesGelli2006}, where the constant of the fracture part of the limiting energy is the minimum over the truncation heights.
We conclude that our $\Gamma$-convergence result extends the earlier result to the stochastic one in a natural and consistent way. 



\section{Proofs} \label{sec:proofs}

We start by defining two functions, which represent sample averages of $\alpha^{-1}$ and $C^\kappa$, and  consider their limits in the next proposition. For arbitrary $N\in\mathbb{N}$ we set
\begin{align*}
\alpha^{-1,(N)}(\omega,A)&:=\dfrac{1}{|NA\cap\mathbb{Z}|}\sum_{i\in NA\cap\mathbb{Z}}\dfrac{1}{\alpha(\tau_i\omega)},\\[2mm]
C^{\kappa,(N)}(\omega,A)&:=\dfrac{1}{|NA\cap\mathbb{Z}|}\sum_{i\in NA\cap\mathbb{Z}}C^\kappa(\tau_i\omega).
\end{align*}

\begin{proposition}
	\label{Prop:averages2}
	Let Assumption \ref{Ass:stochasticLJ} be satisfied. Then there exists an $\Omega'\subset\Omega$ with $\mathbb{P}(\Omega')=1$ such that for all $\omega\in\Omega'$, 
	all $\kappa<\kappa^*$ and for all $A=[a,b]$ with $a,b\in\mathbb{R}$ the limits
	\begin{align*}
	\mathbb{E}[\alpha^{-1}]&=\lim\limits_{N\to\infty}\alpha^{-1,(N)}(\omega,A),\\[2mm]
	\mathbb{E}[C^\kappa]&=\lim\limits_{N\to\infty}C^{\kappa,(N)}(\omega,A)
	\end{align*}
	exist in $\overline{\mathbb{R}}$ and are independent of $\omega$ and the interval $A$. 
\end{proposition}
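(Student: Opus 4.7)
The strategy is a direct application of Birkhoff's pointwise ergodic theorem for the $\mathbb Z$-action $(\tau_i)_{i\in\mathbb Z}$. First I verify integrability of the random variables to be averaged. By Remark~\ref{Rm:alphaneu}(iii), $\alpha(\omega)^{-1}$ is uniformly bounded on $\Omega$, so $\alpha^{-1}\in L^1(\Omega,\mathbb P)$; for $C^\kappa$ with $\kappa<\kappa^*$, the integrability $\mathbb E[C^\kappa]<\infty$ is exactly assumption (R1). Since the action is stationary and ergodic, Birkhoff yields, for each such integrable $f$, a full-measure set $\Omega_f$ on which the one-sided Cesàro averages $\frac{1}{M}\sum_{i=0}^{M-1}f(\tau_i\omega)$ and $\frac{1}{M}\sum_{i=1}^{M}f(\tau_{-i}\omega)$ both converge to the deterministic constant $\mathbb E[f]$ as $M\to\infty$.

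Next I would promote this one-sided convergence to averages over $NA\cap\mathbb Z$ for an arbitrary interval $A=[a,b]$. Since $NA\cap\mathbb Z=\{\lceil Na\rceil,\dots,\lfloor Nb\rfloor\}$, splitting the index range at $0$ expresses $\sum_{i\in NA\cap\mathbb Z} f(\tau_i\omega)$ as an algebraic combination of at most four one-sided partial sums starting from the origin. Each of these, normalized by its own length, converges to $\mathbb E[f]$ on $\Omega_f$; combined with the elementary count $|NA\cap\mathbb Z|=N(b-a)+O(1)$, this gives
\begin{equation*}
\lim_{N\to\infty}\frac{1}{|NA\cap\mathbb Z|}\sum_{i\in NA\cap\mathbb Z} f(\tau_i\omega)=\mathbb E[f]
\end{equation*}
for every $\omega\in\Omega_f$ and every non-degenerate interval $A$, delivering at once both the $\omega$-independence and the $A$-independence of the limit.

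The remaining subtlety is that the proposition demands a single exceptional null set valid simultaneously for \emph{every} $\kappa<\kappa^*$. I would fix a countable dense sequence $\{\kappa_j\}\subset(0,\kappa^*)$, apply the preceding step to $\alpha^{-1}$ and to each $C^{\kappa_j}$, and set $\Omega':=\Omega_{\alpha^{-1}}\cap\bigcap_j\Omega_{C^{\kappa_j}}$, which still has probability one. For an arbitrary $\kappa<\kappa^*$, bracket it by $\kappa_{j_-}\le\kappa\le\kappa_{j_+}$ from the dense subset; the pointwise monotonicity $C^{\kappa_1}\le C^{\kappa_2}$ for $\kappa_1\le\kappa_2$, which is immediate from the definition \eqref{defCkappa}, passes to the finite averages and therefore sandwiches $\liminf_N C^{\kappa,(N)}(\omega,A)$ and $\limsup_N C^{\kappa,(N)}(\omega,A)$ between $\mathbb E[C^{\kappa_{j_-}}]$ and $\mathbb E[C^{\kappa_{j_+}}]$. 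Because $\kappa\mapsto\mathbb E[C^\kappa]$ is continuous on $(0,\kappa^*)$ by dominated convergence with the integrable majorant $C^{\kappa^*}$, letting $\kappa_{j_\pm}\to\kappa$ closes the argument. This $\kappa$-uniform extension is the only nontrivial point; everything else is standard bookkeeping with Birkhoff sums.
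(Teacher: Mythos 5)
Your argument is correct and yields the full statement. The paper's own proof is a one-line citation of a variant of Birkhoff's ergodic theorem from \cite[Section~6.2]{Krengel} together with a reference to the first author's thesis for details, so the approaches are fundamentally the same; however, you take a slightly more elementary route. Rather than invoking a Tempel'man-type ergodic theorem that handles the averaging sets $NA\cap\mathbb{Z}$ directly, you use the basic one-sided pointwise ergodic theorem and reduce the average over an arbitrary (nondegenerate) interval to an algebraic combination of origin-anchored Ces\`aro sums, controlling the $O(1)$ discrepancy in $|NA\cap\mathbb{Z}|$; since the one-sided limits already live on a single null set, the $A$- and $\omega$-independence come for free. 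The place where you genuinely add content that the paper does not spell out is the construction of a single $\Omega'$ valid for \emph{every} $\kappa<\kappa^*$: the countable dense subset, the monotonicity $C^{\kappa_1}\le C^{\kappa_2}$ inherited directly from \eqref{defCkappa}, and the continuity of $\kappa\mapsto\mathbb{E}[C^\kappa]$ via dominated convergence with majorant $C^{\kappa^*}$ from (R1) are exactly the right ingredients. The one point you should make fully explicit is the pointwise continuity of $\kappa\mapsto C^\kappa(\omega)$, which is needed for dominated convergence; it follows from the continuity of $\partial_z^3 J(\omega,\cdot)$ guaranteed by (LJ1) and compactness of the intervals $[\delta(\omega)-\kappa,\delta(\omega)+\kappa]$.
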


\begin{proof}
This is a direct consequence of a variant of Birkhoff's ergodic theorem \cite[Section~6.2]{Krengel}; we refer to \cite[Proposition~5.5.]{Lauerbach-Diss} for details.
\end{proof}
	
	
	
	

\subsection{Proof of Theorem~\ref{Thm:compactnessgammarescaled}} \label{sec:4.1}

The following proof is inspired by corresponding proofs in the deterministic setting  \cite{BraidesLewOrtiz2006,ScardiaSchloemerkemperZanini2012} and  essentially follows \cite{Lauerbach-Diss}. The proof mainly relies on the uniform harmonic approximation in (LJ4), that holds true for all $\omega\in\Omega$. 

\begin{proof}[Proof of Theorem~\ref{Thm:compactnessgammarescaled}]
	Let $(v_n)$ be a sequence with $\sup_nE_n^{\gamma_n}(\omega,v_n)<+\infty$. Then, we have $v_n\in \mathcal{A}_n^{\gamma_n}(0,1)$ with $v_n(0)=0$ and $v_n(1)=\gamma_n$. By (LJ4), there exist constants $K_1,K_2>0$ such that
	\begin{align}
	\label{energieabschaetzungmitK}
	\begin{split}
	E_n^{\gamma_n}(\omega,v_n)&=\sum_{i=0}^{n-1}\left(J\left(\tau_i\omega,\dfrac{v_n^{i+1}-v_n^{i}}{\sqrt{\lambda_n}}+\delta(\tau_i\omega)\right)-J\left(\tau_i\omega,\delta(\tau_i\omega)\right) \right)\\[1mm]
	&\geq\sum_{i=0}^{n-1}\left(K_1\left(\dfrac{v_n^{i+1}-v_n^{i}}{\sqrt{\lambda_n}} \right)^2\wedge K_2\right)=\sum_{i=0}^{n-1}\left(\lambda_nK_1\left(\dfrac{v_n^{i+1}-v_n^{i}}{\lambda_n} \right)^2\wedge K_2\right).
	\end{split}
	\end{align}
	We frequently make use of this inequality in the following. First of all, one can extract from \eqref{energieabschaetzungmitK} a bound for the gradients. The energy is equibounded and all terms in the sum are positive. Together with the superlinear growth at zero due to (LJ1) this yields
	\begin{align*}
	\delta(\tau_i\omega)+\dfrac{v_n^{i+1}-v_n^{i}}{\sqrt{\lambda_n}}\geq 0,
	\end{align*}
	for all $i\in\{0,\ldots,n-1\}$ and for all $n\in\mathbb{N}$. Since we have $\delta(\tau_i\omega)\leq d$ for all $i\in\{0,\ldots,n-1\}$ due to (LJ2), we have
	\begin{align}
	\label{abschaetzungM}
	\dfrac{v_n^{i+1}-v_n^{i}}{\lambda_n}\geq -\dfrac{d}{\sqrt{\lambda_n}}.
	\end{align}
	
	\noindent
	\textbf{Step 1:} We show $\sup_n\lVert v_n\rVert_{W^{1,1}(0,1)}<+\infty$ and the existence of a subsequence $(v_{n_k})$ and $v\in BV^{\gamma}(0,1)$ such that $v_{n_k}\rightharpoonup^*v$ in $BV(0,1)$. To this end, we define
	\begin{align*}
	I_n^-&:=\left\{i\in\{0,\ldots,n-1\}\, :\,v_n^{i+1}<v_n^{i}\right\},\\[2mm]
	I_n^{--}&:=\left\{i\in I_n^-\, :\,\lambda_nK_1\left(\dfrac{v_n^{i+1}-v_n^{i}}{\lambda_n} \right)^2\geq K_2 \right\}.
	\end{align*}
	Since all addends of the rescaled energy are positive, we have
	\begin{align*}
	E_n^{\gamma_n}(\omega,v_n)&\geq\sum_{i\in I_n^-}\left(\lambda_nK_1\left(\dfrac{v_n^{i+1}-v_n^{i}}{\lambda_n} \right)^2\wedge K_2\right)=\sum_{i\in I_n^-\setminus I_n^{--}}\left(\lambda_nK_1\left(\dfrac{v_n^{i+1}-v_n^{i}}{\lambda_n}\right)^2 \right)+K_2\#I_n^{--}.
	\end{align*}
	As the energy is equibounded and $K_2>0$ this shows $I^{--}:=\sup_n\#I_n^{--}<+\infty$. Defining $(v_n')^-:=-(v_n'\wedge0)$, we get with the Hölder inequality
	\begin{align*}
	\lVert(v_n')^-\rVert_{L^1(0,1)}&=\sum_{i\in I_n^-}\lambda_n\left|\dfrac{v_n^{i+1}-v_n^{i}}{\lambda_n}\right|\stackrel{\eqref{abschaetzungM}}{\leq}\sum_{i\in I_n^-\setminus I_n^{--}}\lambda_n\left|\dfrac{v_n^{i+1}-v_n^{i}}{\lambda_n}\right|+\#I_n^{--}\lambda_n\left|\dfrac{d}{\sqrt{\lambda_n}} \right|\\[2mm]
	&\leq\left(\sum_{i\in I_n^-\setminus I_n^{--}}\lambda_n\left|\dfrac{v_n^{i+1}-v_n^{i}}{\lambda_n}\right|^2\right)^{\frac{1}{2}}\cdot\left(\sum_{i\in I_n^-\setminus I_n^{--}}\lambda_n \right)^{\frac{1}{2}}+\#I_n^{--}\sqrt{\lambda_n}d\\[2mm]
	&\leq\left(\dfrac{1}{K_1}E_n^{\gamma_n}(\omega,v_n) \right)^{\frac{1}{2}}+I^{--}d.
	\end{align*}
	Therefore, we have $\lVert(v_n')^-\rVert_{L^1(0,1)}<C$ for all $n\in\mathbb{N}$ and for a constant $C>0$, because $I^{--}<\infty$. Together with the boundary data $v_n(0)=0$ and $v_n(1)=\gamma_n$, this leads to
	\begin{align*}
	\int_{\{v_n'\geq0\}}v_n'(x)\,\mathrm{d}x=\gamma_n-\int_{\{v_n'<0\}}v_n'(x)\,\mathrm{d}x\leq\gamma_n+C.
	\end{align*}
	Hence $\lVert v_n'\rVert_{L^1(0,1)}\leq \gamma_n+2C\leq\tilde{C}$
	as $\gamma_n$ is bounded by \eqref{bedingunggamma}. Since we have $v_n(0)=0$, the Poincaré inequality now provides $\sup_n\lVert v_n\rVert_{W^{1,1}(0,1)}<+\infty$. The equiboundedness of the $W^{1,1}$-norm then again yields the existence of a subsequence $(v_{n_k})$ and $v\in BV(0,1)$ such that $v_{n_k}\rightharpoonup^*v$ in $BV(0,1)$. To obtain the boundary values, we define an extension of the sequence $v_n$ by
	\begin{align*}
	\tilde{v}_{n}^i =\begin{cases}
	0&\quad\text{if}\ i\leq0,\\
	v_n^{i}&\quad\text{if}\ 0<i< n,\\
	\gamma_n&\quad\text{if}\ i\geq n,
	\end{cases}
	\end{align*}
	which is in $W^{1,\infty}(\mathbb{R})$ because it holds $v_n(0)=0$ and $v_n(1)=\gamma_n$ for every $n\in\mathbb{N}$. Then, $\tilde{v}_{n_k}$ converges weakly$^*$ in $BV_{loc}(\mathbb{R})$ to the extension $\tilde{v}$ of $v$. Therefore, we have
	$v(0^-)=\lim\limits_{t\rightarrow 0^-}\tilde{v}(t)=0$ and  $v(1^+)=\lim\limits_{t\rightarrow1^+}\tilde{v}(t)=\gamma$,
	and thus $v\in BV^\gamma(0,1)$.\\
	
	\noindent
	\textbf{Step 2:} To show that $v\in SBV^{\gamma}(0,1)$, $v'\in L^2(0,1)$ and $\#S_v<+\infty$, we define the set
	\begin{align*}
	I_n:=\left\{i\in\{0,\ldots,n-1\}\, :\,\lambda_nK_1\left(\dfrac{v_n^{i+1}-v_n^{i}}{\lambda_n} \right)^2\geq K_2 \right\},
	\end{align*}
	and $(\tilde{v}_n)\subset SBV(0,1)$ by $\tilde{v}_n(1):=\gamma_n$ and
	\begin{align*}
	\tilde{v}_n(x):=\begin{cases}
	v_n(x)&\quad\text{if}\ x\in\lambda_n[i,i+1),\ i\notin I_n,\\[1mm]
	v_n(i\lambda_n)&\quad\text{if}\ x\in\lambda_n[i,i+1),\ i\in I_n,\ |v_n(i\lambda_n)|< |v_n((i+1)\lambda_n)|,\\[1mm]
	v_n((i+1)\lambda_n)&\quad\text{if}\ x\in\lambda_n[i,i+1),\ i\in I_n,\ |v_n(i\lambda_n)|\geq |v_n((i+1)\lambda_n)|.
	\end{cases}
	\end{align*}
	The construction of $\tilde{v}_n$ is done in such a way that we can show (i) $\lim\limits_{n\rightarrow\infty}\lVert \tilde{v}_n-v_n\rVert_{L^1(0,1)}=0$ and (ii) $\lVert\tilde{v}_n\rVert_{BV(0,1)}\leq C\lVert v_n\rVert_{W^{1,1}(0,1)}$ and therefore $\tilde{v}_n\rightharpoonup^*v$ in $BV(0,1)$ holds true up to the subsequence $v_{n_k}$, cf.~step 1 (not relabelled). We start with (i) and observe
	\begin{align}
	\label{tildev}
	\begin{split}
	\lVert \tilde{v}_n-v_n\rVert_{L^1(0,1)}&=\sum_{i\in I_n}\int_{i\lambda_n}^{(i+1)\lambda_n}\left|\tilde{v}_n(x)-v_n(x) \right|\,\mathrm{d}x\\[1mm]
	&=\sum_{i\in I_n}\int_{i\lambda_n}^{(i+1)\lambda_n}\left|\tilde{v}_n(i\lambda_n)-v_n(i\lambda_n)+\int_{i\lambda_n}^{x}\tilde{v}_n'(y)-v_n'(y) \,\mathrm{d}y\right|\,\mathrm{d}x.
	\end{split}
	\end{align}
	We have to distinguish two cases, namely $|v_n(i\lambda_n)|<|v_n((i+1)\lambda_n)|$ and $|v_n(i\lambda_n)|\geq |v_n((i+1)\lambda_n)|$. For the first one, $\tilde{v}_n(i\lambda_n)=v_n(i\lambda_n)$ holds true as well as $\tilde{v}_n'(y)\equiv0$ and therefore
	\begin{align*}
	\left|\tilde{v}_n(i\lambda_n)-v_n(i\lambda_n)+\int_{i\lambda_n}^{x}\tilde{v}_n'(y)-v_n'(y) \,\mathrm{d}y\right|&=\left|\int_{i\lambda_n}^{x}-v_n'(y) \,\mathrm{d}y\right| 
	\leq\int_{i\lambda_n}^{(i+1)\lambda_n}\left| v_n'(y)\right| \,\mathrm{d}y.
	\end{align*}
	For the second case $|v_n(i\lambda_n)|\geq |v_n((i+1)\lambda_n)|$, it holds true that $\tilde{v}_n(i\lambda_n)=v_n((i+1)\lambda_n)$ and $\tilde{v}_n'(y)\equiv0$, thus we get	 
	\begin{align*}
	&\left|\tilde{v}_n(i\lambda_n)-v_n(i\lambda_n)+\int_{i\lambda_n}^{x}\tilde{v}_n'(y)-v_n'(y) \,\mathrm{d}y\right| 
	=\left|\int_{i\lambda_n}^{(i+1)\lambda_n}v_n'(y) \,\mathrm{d}y-\int_{i\lambda_n}^{x}v_n'(y) \,\mathrm{d}y\right|\\[2mm]
	&=\left|\int_{x}^{(i+1)\lambda_n}v_n'(y) \,\mathrm{d}y\right|
	\leq\int_{x}^{(i+1)\lambda_n}\left|v_n'(y)\right| \,\mathrm{d}y\leq\int_{i\lambda_n}^{(i+1)\lambda_n}\left|v_n'(y)\right| \,\mathrm{d}y,
	\end{align*}
	which is the same result as for the first case. Therefore, we continue with \eqref{tildev} as	 
	\begin{align*}
	\lVert \tilde{v}_n-v_n\rVert_{L^1(0,1)}&\leq\sum_{i\in I_n}\int_{i\lambda_n}^{(i+1)\lambda_n}\int_{i\lambda_n}^{(i+1)\lambda_n}\left|\dfrac{v_n^{i+1}-v_n^{i}}{\lambda_n} \right|\,\mathrm{d}y\,\mathrm{d}x\\[2mm]
	&=\lambda_n\int_{0}^{1}\left|\dfrac{v_n^{i+1}-v_n^{i}}{\lambda_n} \right|\,\mathrm{d}x=\lambda_n\lVert v_n'\rVert_{L^1(0,1)}\leq\lambda_n\tilde{C},
	\end{align*}
	which shows (i) $\lim\limits_{n\rightarrow\infty}\lVert \tilde{v}_n-v_n\rVert_{L^1(0,1)}=0$. Next, we show (ii). It holds true that
	\begin{align*}
	\lVert\tilde{v}_n\rVert_{BV(0,1)}&=\int_{0}^{1}\left|\tilde{v}_n(x) \right|\,\mathrm{d}x+\int_{0}^{1}\left|\tilde{v}_n'(x) \right|\,\mathrm{d}x+\sum_{i\in I_n}\left|v_n((i+1)\lambda_n)-v_n(i\lambda_n) \right|\\[1mm]
	&\hspace{-0.4mm}\stackrel{(*)}{\leq} 2\int_{0}^{1}\left|v_n(x) \right|\,\mathrm{d}x+\sum_{i\notin I_n}\lambda_n\left|\dfrac{v_n^{i+1}-v_n^{i}}{\lambda_n}\right|+\sum_{i\in I_n}\lambda_n\left|\dfrac{v_n^{i+1}-v_n^{i}}{\lambda_n}\right|\leq C\lVert v_n\rVert_{W^{1,1}(0,1)}.
	\end{align*}
	The estimate $(*)$ can be seen as follows. For $i\notin I_n$, we have that $\tilde{v}_n(x)=v_n(x)$ and the estimate is obviously true. For $i\in I_n$, we have to distinguish between two cases, (a) $v_n(i\lambda_n)$ and $v_n((i+1)\lambda_n)$ have the same sign and (b) $v_n(i\lambda_n)$ and $v_n((i+1)\lambda_n)$ have different signs. For (a), it holds true $|\tilde{v}_n(x)|\leq|v_n(x)|$, by construction. For (b), we have $\int_{i\lambda_n}^{(i+1)\lambda_n}|\tilde{v}_n(x)|\leq 2\int_{i\lambda_n}^{(i+1)\lambda_n}|v_n(x)|$, recalling that $v_n$ is affine on the given interval. Altogether, this shows (ii) $\lVert\tilde{v}_n\rVert_{BV(0,1)}\leq C\lVert v_n\rVert_{W^{1,1}(0,1)}$.\\
	
	Moreover, $\#S_{\tilde{v}_n}=\#I_n$ by the definition of $\tilde{v}_n$. From \eqref{energieabschaetzungmitK} and with $C>0$, we get
	\begin{align*}
	C&>E_n^{\gamma_n}(\omega,v_n)\geq\sum_{i\notin I_n}\left(\lambda_n K_1\left(\dfrac{v_n^{i+1}-v_n^{i}}{\lambda_n}\right)^2\right)+K_2\#I_n \\
	&\geq\min\{K_1,K_2\}\left(\int_{0}^{1}\left|\tilde{v}_n'(x) \right|^2\,\mathrm{d}x+\#S_{\tilde{v}_n} \right),
	\end{align*}
	which yields $\sup_n\lVert \tilde{v}_n'\rVert_{L^2(0,1)}<+\infty$ and $\sup_n\#S_{\tilde{v}_n}<+\infty$. Therefore, the closure theorem for SBV \cite[Theorem~4.7]{AmbrosioFuscoPallara} provides $v\in SBV(0,1)$, $\tilde{v}_n'\rightharpoonup v'$ in $L^1(0,1)$, up to the subsequence $v_{n_k}$, cf.~step 1 (not relabelled), the weak$^*$-convergence of the jump part of the derivative $D^j\tilde{v}_n\rightharpoonup^* D^jv$ in $(0,1)$ and $\#S_v\leq\liminf_{n\rightarrow\infty}\#S_{\tilde{v}_n}<\infty$. Further, $\sup_n\lVert \tilde{v}_n'\rVert_{L^2(0,1)}<+\infty$ yields $\tilde{v}_n'\rightharpoonup v'$ in $L^2(0,1)$ with $v'\in L^2(0,1)$. By step 1, we have $v\in BV^{\gamma}(0,1)$, which also provides $v\in SBV^{\gamma}(0,1)$. This completes step~2.\\
	
	\noindent
	\textbf{Step 3:} We show that there exists a finite set $S\subset[0,1]$ such that $v_{n_k}\rightharpoonup v$ locally weakly in $H^1((0,1)\setminus S)$.
	
	In order to simplify notation, we omit the index $k$ of the subsequence. Since $\sup_n\#S_{\tilde{v}_n}<\infty$, there exist $m\in\mathbb{N}$ and $x_1^n,\ldots,x_m^n\in[0,1]$ such that $S_{\tilde{v}_n}\subset\{x_i^n:i\in\{1,\ldots,m\}\}$. From $D^j\tilde{v}_n\rightharpoonup^* D^jv$ we get that $x_i^n\rightarrow x_i\in[0,1]$ for all $i\in\{1,\ldots,m\}$ up to a subsequence. For a fixed $\eta>0$, we define $S:=\{x_1,\ldots,x_m\}$ and $S_{\eta}:=\bigcup_{i=1}^m(x_i-\eta,x_i+\eta)$. Due to the convergence of $x_i^n$, there exists $N\in\mathbb{N}$ such that $S_{\tilde{v}_n}\subset S_{\eta}$ and therefore $v_n\equiv \tilde{v}_n$ on $(0,1)\setminus S_{\eta}$ for $n\geq N$ and $\sup_{n\geq N}\lVert v_n'\rVert_{L^2((0,1)\setminus S_{\eta})}<+\infty$. Then, the Poincaré inequality on every connected subset $A$ of $(0,1)\setminus S_{\eta}$ yields $\sup_n\lVert v_n\rVert_{L^2((0,1)\setminus S_{\eta})}<\infty$, which can be shown as follows:
	\begin{align*}
	\int_{A}\left|v_n \right|^2\,\mathrm{d}x&\leq\int_{A}\left((v_n-\bar{v}_n)^2+2v_n\bar{v}_n \right)\,\mathrm{d}x\leq C\lVert v_n'\rVert_{L^2(0,1)}+2\int_{A}v_n\,\mathrm{d}x\cdot \bar{v}_n\\[1mm]
	&\leq C\lVert v_n'\rVert_{L^2(0,1)}+\frac{2}{|A|}\lVert v_n\rVert^2_{L^1(0,1)},
	\end{align*}
	where $\bar{v}_n:=\frac{1}{|A|}\int_{A}v_n(x)\,\mathrm{d}x$. The right hand side is uniformly bounded, which was shown in step 1 and step 2. Altogether, we have $v_n\rightharpoonup v$ in $H^1((0,1)\setminus S_{\eta})$. Since $\eta$ was chosen arbitrary, we get the asserted result by passing to the limit as $\eta\rightarrow 0$.\\
	
	\noindent
	\textbf{Step 4:} We show $[v]\geq 0$ in $[0,1]$, i.e.~$[v](x)>0$ for $x\in S_v$.
	
	Following \cite{BraidesLewOrtiz2006,ScardiaSchloemerkemperZanini2012}, we observe that there exist constants $D_1, D_2, D_3>0$ such that
	\begin{align}
	\label{energiePhi}
	E_n^{\gamma_n}(\omega,v_n)=\sum_{i=0}^{n-1}\left(J\left(\tau_i\omega,\dfrac{v_n^{i+1}-v_n^{i}}{\sqrt{\lambda_n}}+\delta(\tau_i\omega)\right)-J(\tau_i\omega,\delta(\tau_i\omega)) \right)\geq \sum_{i=0}^{n-1}\varphi\left(\dfrac{v_n^{i+1}-v_n^{i}}{\sqrt{\lambda_n}} \right)
	\end{align}
	with
	\begin{align*}
	\varphi(x):=\begin{cases}
	D_1x^2\wedge D_2&\quad\text{for}\ x>0,\\
	D_1x^2\wedge D_3&\quad\text{for}\ x\leq 0.
	\end{cases}
	\end{align*}
	It is not restrictive to assume
	\begin{align}
	\label{abschaetzungD}
	D_3>D_1d^2,
	\end{align}
	because of the superlinear growth at $z\to0^+$ of the potentials $J(\omega,z)$ and the asymptotic behaviour $\lim_{z\to+\infty}J(\omega,z)=0>J(\omega,\delta(\omega))$. We define
	\begin{align*}
	\tilde{I}_n^+&:=\left\{i\in\{0,\ldots,n-1\}\, :\, v_n^{i+1}>v_n^{i}\ \text{and}\ D_1\left(\dfrac{v_n^{i+1}-v_n^{i}}{\sqrt{\lambda_n}}\right)^2>D_2  \right\},\\[1mm]
	\tilde{I}_n^-&:=\left\{i\in\{0,\ldots,n-1\}\, :\, v_n^{i+1}<v_n^{i}\ \text{and}\ D_1\left(\dfrac{v_n^{i+1}-v_n^{i}}{\sqrt{\lambda_n}}\right)^2>D_3  \right\}.
	\end{align*}
	First of all, $\tilde{I}_n^-=\emptyset$ is valid, because
	\begin{align*}
	(v_n^{i+1}-v_n^{i})^2\stackrel{\in\tilde{I}_n^-}{>}\dfrac{D_3}{D_1}\lambda_n\stackrel{\eqref{abschaetzungD}}{>}\dfrac{D_1d^2}{D_1}\lambda_n=d^2\lambda_n\stackrel{\in\tilde{I}_n^-\text{ and }\eqref{abschaetzungM}}{\geq}(v_n^{i+1}-v_n^{i})^2
	\end{align*}
	is a contradiction. Since all summands in \eqref{energiePhi} are positive, we get
	\begin{align*}
	E_n^{\gamma_n}(\omega,v_n)\geq\sum_{i\notin \tilde{I}_n^{\pm}}\varphi\left(\dfrac{v_n^{i+1}-v_n^{i}}{\sqrt{\lambda_n}} \right)+\#\tilde{I}_n^+D_2.
	\end{align*}
	Thus, the equiboundedness of the energy implies $\sup_n\#\tilde{I}_n^+<\infty$. We define $(\hat{v}_n)\subset SBV(0,1)$ as
	\begin{align*}
	\hat{v}_n(x):=\begin{cases}
	v_n(x)&\quad\text{for}\ x\in\lambda_n[i,i+1),\ i\notin\tilde{I}_n^+,\\[1mm]
	v_n(i\lambda_n)&\quad\text{for}\ x\in\lambda_n[i,i+1),\ i\in\tilde{I}_n^+.
	\end{cases}
	\end{align*}
	Similarly as in step~2, we have $\hat{v}_n\rightharpoonup^* v$ in $BV(0,1)$, up to a subsequence. By definition, $\hat{v}_n(x)$ has only positive jumps, i.e.~$D^j\hat{v}_n\geq0$ in $(0,1)$. Now, we define the following auxiliary functions in $SBV(a,b)$ for any $a<0$ and $b>1$.
	\begin{align*}
	w(x):=\begin{cases}
	0&\quad\text{for}\ x\leq0,\\
	v(x)&\quad\text{for}\ 0<x<1,\\
	\gamma&\quad\text{for}\ 1\leq x,
	\end{cases}\qquad
	w_n(x):=\begin{cases}
	0&\quad\text{for}\ x\leq0,\\
	\hat{v}_n(x)&\quad\text{for}\ 0<x<1,\\
	\gamma_n&\quad\text{for}\ 1\leq x,
	\end{cases}
	\end{align*}
	to capture also possible jumps at the boundary. With $\hat{v}_n\rightharpoonup^* v$ in $BV(0,1)$, this also yields $w_n\rightharpoonup^* w$ in $BV(a,b)$ for any $a<0$ and $1<b$.
	\begin{align*}
	E_n^{\gamma_n}(\omega,v_n)&\geq\sum_{i=0}^{n-1}\varphi\left(\dfrac{v_n^{i+1}-v_n^{i}}{\sqrt{\lambda_n}} \right)=
	\sum_{i\notin \tilde{I}_n^{\pm}}\lambda_nD_1\left(\dfrac{\hat{v}_n^{i+1}-\hat{v}_n^{i}}{\lambda_n} \right)^2+D_2\#S_{\hat{v}_n}\\[1mm]
	&\geq D_1\int_{0}^{1}\left|\hat{v}_n'(x) \right|^2\,\mathrm{d}x+D_2\#S_{\hat{v}_n}=D_1\int_{0}^{1}\left|w_n'(x) \right|^2\,\mathrm{d}x+D_2\#S_{w_n}.
	\end{align*}
	Again due to the SBV closure theorem \cite[Theorem~4.7]{AmbrosioFuscoPallara}, this provides $D^jw_n\rightharpoonup^*D^jw$ in $(a,b)$. By construction of $w_n$ and by $D^j\hat{v}_n\geq0$ in $(0,1)$, we get $D^jw_n\geq0$ in $(a,b)$. Altogether, this yields $D^jw\geq0$ in $(a,b)$. And since $D^jv$ is the restriction of $D^jw$ to $[0,1]$, we finally obtain $D^jv\geq 0$ in $[0,1]$.
\end{proof}

\subsection{Proof of Theorem~\ref{Thm:rescaled}}
\label{Sec:GammaLimitRescaled}

Before proving the main theorem of this article, we first state  and show an extended version of the Birkhoff ergodic theorem \cite[Theorem~2.3]{Krengel}.

\begin{proposition}
	\label{Prop:ergodicbeta}
	Let $(\Omega,\mathcal{F},\mathbb{P})$ be a probability space and let $(\tau_i)_{i\in\mathbb{Z}}$ be an additive, stationary and ergodic group action. For $\epsilon>0$ and $x\in\mathbb{R}$, let $I_x^{\epsilon}=]x-\epsilon,x+\epsilon[$ and let $f$ be an integrable random variable. Then there exists an $\Omega'\subset\Omega$ with $\mathbb{P}(\Omega')=1$ such that for every $x\in\mathbb{R}$ and every $k\in\mathbb{Q}$ the following limit exists: 
	\begin{align*}
	\lim\limits_{n\rightarrow\infty}\dfrac{1}{2\epsilon n}\sum_{i\in\mathbb{Z}\cap n I_x^{\epsilon}}\chi_{(f(\tau_i\omega)\leq k)}=\mathbb{E}\left[\chi_{f\leq k} \right].
	\end{align*}
\end{proposition}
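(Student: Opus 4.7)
The plan is to reduce the claim to the standard pointwise Birkhoff ergodic theorem applied to the bounded (hence integrable) random variable $g_k := \chi_{\{f\leq k\}}$, and then to upgrade convergence from rational windows to arbitrary $x\in\mathbb{R}$ by a sandwich/density argument. The main challenge is the quantifier order: a single full-measure set $\Omega'$ must work simultaneously for every $x\in\mathbb{R}$, while $\mathbb{R}$ is uncountable; the only way in is to reduce first to a countable index set.

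First, for each fixed $k\in\mathbb{Q}$ and each pair of rationals $a<b$, the standard Birkhoff ergodic theorem \cite[Theorem~2.3]{Krengel} applied to $g_k$, combined with the ergodicity of $(\tau_i)_{i\in\mathbb{Z}}$, yields a full-measure set $\Omega_{k,a,b}\subset\Omega$ on which
\[
\lim_{n\to\infty}\frac{1}{n(b-a)}\sum_{i\in\mathbb{Z}\cap n(a,b)} g_k(\tau_i\omega) \;=\;\mathbb{E}[g_k].
\]
This is obtained from the one-sided version by writing the sum over $\mathbb{Z}\cap n(a,b)$ as the difference of two cumulative partial sums indexed by $\lfloor nb\rfloor$ and $\lceil na\rceil$, and using stationarity. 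Intersecting over the countable family $\{(k,a,b)\in\mathbb{Q}^3 : a<b\}$ produces a single $\Omega'\subset\Omega$ with $\mathbb{P}(\Omega')=1$ on which the display above holds for every rational triple.

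To pass to arbitrary real $x$, fix $\omega\in\Omega'$, $x\in\mathbb{R}$, $k\in\mathbb{Q}$, and $\delta>0$, and pick rationals $a^\pm,b^\pm$ with $a^-<x-\epsilon<a^+$, $b^-<x+\epsilon<b^+$, and $a^+-a^-<\delta$, $b^+-b^-<\delta$. Since $g_k\geq 0$, the sandwich
\[
\sum_{i\in\mathbb{Z}\cap n(a^+,b^-)} g_k(\tau_i\omega) \;\leq\; \sum_{i\in\mathbb{Z}\cap nI_x^\epsilon} g_k(\tau_i\omega) \;\leq\; \sum_{i\in\mathbb{Z}\cap n(a^-,b^+)} g_k(\tau_i\omega)
\]
holds. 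Dividing by $2\epsilon n$ and invoking the previous paragraph, the outer terms converge to $\tfrac{b^--a^+}{2\epsilon}\mathbb{E}[g_k]$ and $\tfrac{b^+-a^-}{2\epsilon}\mathbb{E}[g_k]$ respectively, and both prefactors tend to $1$ as $\delta\to 0$. Hence the middle quantity converges to $\mathbb{E}[g_k]=\mathbb{E}[\chi_{f\leq k}]$.

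The hard part is exactly this last step: a countable intersection over $x\in\mathbb{R}$ is unavailable, and it is crucial that $0\leq g_k\leq 1$, so that the sandwich error is uniform in $n$ and controlled purely by the Lebesgue length of the symmetric difference $nI_x^\epsilon \triangle n(a^\pm,b^\pm)$, which rational approximation makes arbitrarily small.
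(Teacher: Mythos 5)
Your proof is correct and takes essentially the same approach as the paper: apply Birkhoff to the bounded nonnegative indicator $g_k=\chi_{\{f\le k\}}$ over a countable family of rational windows, intersect the resulting full-measure sets, and then sandwich an arbitrary window $nI_x^\epsilon$ between nearby rational windows, exploiting $g_k\ge 0$ to control the error by the length of the symmetric difference. The only cosmetic difference is that the paper parametrizes the countable family by center and radius $(x,\epsilon)\in\mathbb{Q}\times\mathbb{Q}^+$ and extends first in $\epsilon$ and then in $x$, whereas you parametrize by endpoints $(a,b)\in\mathbb{Q}^2$ and do the extension in a single sandwich step.
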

\begin{proof}
	From the Birkhoff ergodic theorem \cite[Theorem~2.3]{Krengel}, we get the existence of $\Omega_{x,k,\epsilon}\subset\Omega$ with $\mathbb{P}(\Omega_{x,k,\epsilon})=1$ such that
	\begin{align*}
	\lim\limits_{n\rightarrow\infty}\dfrac{1}{2\epsilon n}\sum_{i\in\mathbb{Z}\cap n I_x^{\epsilon}}\chi_{(f(\tau_i\omega)\leq k)}=\mathbb{E}\left[\chi_{f\leq k} \right]
	\end{align*}
	for a fixed $x\in\mathbb{Q}$, fixed $k\in\mathbb{Q}$ and a fixed $\epsilon\in\mathbb{Q}^+$. Since for $\tilde{\Omega}:=\bigcap_{x\in\mathbb{Q},k\in\mathbb{Q},\epsilon\in\mathbb{Q}}\Omega_{x,k,\epsilon}$, we have $\mathbb{P}(\tilde{\Omega})=1$, the assertion of the theorem is already shown for every $x\in\mathbb{Q}$, $k\in\mathbb{Q}$ and $\epsilon\in\mathbb{Q}^+$. It remains to expand it to $\epsilon\in\mathbb{R}^+$ and $x\in\mathbb{R}$, which is done in two steps.\\
	
	\noindent
	\textbf{Step 1:} We prove the assertion for $\epsilon\in\mathbb{R}^+\setminus\mathbb{Q}^+$. For  this, notice that for every $\epsilon\in\mathbb{R}^+\setminus\mathbb{Q}^+$ there exist sequences $\left(\epsilon_N^1\right)\subset\mathbb{Q}^+$ and $\left(\epsilon_N^2\right)\subset\mathbb{Q}^+$ with $\epsilon_N^1\to \epsilon$, $\epsilon_N^2\to\epsilon$ and $\epsilon_N^1\leq\epsilon\leq\epsilon_N^2$ for every $N\in\mathbb{N}$. The definition implies $I_{x}^{\epsilon_N^1}\subset I_{x}^{\epsilon}\subset I_{x}^{\epsilon_N^2}$. Therefore, it holds true that
	\begin{align*}
	\dfrac{2\epsilon_N^1 n}{2\epsilon n}\dfrac{1}{2\epsilon_N^1 n}\sum_{i\in\mathbb{Z}\cap n I_{x}^{\epsilon_N^1}}\chi_{(f(\tau_i\omega)\leq k)}\leq\dfrac{1}{2\epsilon n}\sum_{i\in\mathbb{Z}\cap n I_{x}^{\epsilon}}\chi_{(f(\tau_i\omega)\leq k)}\leq \dfrac{2\epsilon_N^2 n}{2\epsilon n}\dfrac{1}{2\epsilon_N^2 n}\sum_{i\in\mathbb{Z}\cap n I_{x}^{\epsilon_N^2}}\chi_{(f(\tau_i\omega)\leq k)}.
	\end{align*}
	Taking the $\limsup_{n\to\infty}$ on both sides and recalling $x,k,\epsilon_N^1,\epsilon_N^2\in\mathbb{Q}^+$ , we get
	\begin{align*}
	\dfrac{\epsilon_N^1 }{\epsilon }\mathbb{E}[\chi_{f\leq k}]\leq\limsup_{n\to\infty}\dfrac{1}{2\epsilon n}\sum_{i\in\mathbb{Z}\cap n I_{x}^{\epsilon}}\chi_{(f(\tau_i\omega)\leq k)}\leq \dfrac{\epsilon_N^2 }{\epsilon }\mathbb{E}[\chi_{f\leq k}].
	\end{align*}
	Passing subsequently to the limit as $N\to\infty$, we obtain the assertion by $\frac{\epsilon_N^{1,2}}{\epsilon}\to1$.\\
	
	\noindent
	\textbf{Step 2:} To prove the assertion for $x\in\mathbb{R}\setminus\mathbb{Q}$, we notice that for every $x_0\in\mathbb{R}\setminus\mathbb{Q}$ there exist sequences $(x_N)\subset\mathbb{Q}$ and $(\epsilon_N)\subset\mathbb{R}$ with $x_N\rightarrow x_0$ and $\epsilon_N=\epsilon+|x_N-x_0|$ for every $N$. The definition implies $I_{x_0}^{\epsilon}\subset I_{x_N}^{\epsilon_N}$. Therefore, we get
	\begin{align*}
	\dfrac{1}{2\epsilon n}\sum_{i\in\mathbb{Z}\cap n I_{x_0}^{\epsilon}}\chi_{(f(\tau_i\omega)\leq k)}\leq\dfrac{2\epsilon_N n}{2\epsilon n}\dfrac{1}{2\epsilon_Nn}\sum_{i\in\mathbb{Z}\cap n I_{x_N}^{\epsilon_N}}\chi_{(f(\tau_i\omega)\leq k)}.
	\end{align*}
	Taking $\limsup_{n\rightarrow\infty}$, we get
	\begin{align*}
	\limsup_{n\rightarrow\infty}\dfrac{1}{2\epsilon n}\sum_{i\in\mathbb{Z}\cap n I_{x_0}^{\epsilon}}\chi_{(f(\tau_i\omega)\leq k)}\leq\dfrac{\epsilon_N }{\epsilon}\mathbb{E}\left[\chi_{f\leq k} \right],
	\end{align*}
	because $x_N\in\mathbb{Q}$. Subsequently, we take the limit as $N$ tends to infinity and get
	\begin{align}
	\label{betasup}
	\limsup_{n\rightarrow\infty}\dfrac{1}{2\epsilon n}\sum_{i\in\mathbb{Z}\cap n I_{x_0}^{\epsilon}}\chi_{(f(\tau_i\omega)\leq k)}\leq\mathbb{E}\left[\chi_{f\leq k} \right].
	\end{align}
	Analogously, to prove
	\begin{align}
	\label{betainf}
	\liminf_{n\rightarrow\infty}\dfrac{1}{2\epsilon n}\sum_{i\in\mathbb{Z}\cap n I_{x_0}^{\epsilon}}\chi_{(f(\tau_i\omega)\leq k)}\geq\mathbb{E}\left[\chi_{f\leq k} \right],
	\end{align}
    we replace the requirement $I_{x_0}^{\epsilon}\subset I_{x_N}^{\epsilon_N}$ by $I_{x_N}^{\epsilon_N}\subset I_{x_0}^{\epsilon}$ and $\epsilon_N=\epsilon+|x_N-x_0|$ by $\epsilon_N=\epsilon-|x_N-x_0|$. Then, \eqref{betasup} and \eqref{betainf} together yield
	\begin{align*}
	\lim_{n\rightarrow\infty}\dfrac{1}{2\epsilon n}\sum_{i\in\mathbb{Z}\cap n I_{x_0}^{\epsilon}}\chi_{(f(\tau_i\omega)\geq k)}=\mathbb{E}\left[\chi_{f\leq k} \right]
	\end{align*}
	for $x_0\in\mathbb{R}\setminus\mathbb{Q}$, which completes the proof.
\end{proof}

Finally, we show the $\Gamma$-limit result of the rescaled energy functional, the main result of this article.

\begin{proof}[Proof of Theorem~\ref{Thm:rescaled}]
	First of all, we notice that the expectation value of $\alpha^{-1}$ exists due to Remark~\ref{Rm:integrabilityandexpectationvalues2} and therefore $\underline{\alpha}$ is well defined.
	In the following, we  several times use a Taylor expansion of $J(\tau_i\omega,x)$, which reads
	\begin{align}
	\label{taylor}
	J(\tau_i\omega,\delta(\tau_i\omega)+x)=J(\tau_i\omega,\delta(\tau_i\omega))+\alpha(\tau_i\omega)x^2+\eta(\tau_i\omega,x),
	\end{align}
	with $\alpha(\tau_i\omega):=\frac{1}{2}J''(\tau_i\omega,\delta(\tau_i\omega))$ and $\tfrac{\eta(\tau_i\omega,x)}{|x|^2}\rightarrow0\quad\text{as}\ |x|\rightarrow0$.
	Since $J\in C^3$ by (LJ1), we can use the Lagrange form of the remainder and get
	\begin{align}
	\label{lagrangeremainder}
	\eta(\tau_i\omega,x)=\frac{1}{6}\left.\dfrac{\partial^3 J(\tau_i\omega,y)}{\partial y^3}\right|_{y=\xi}x^3\quad\text{for some}\ \xi\ \text{between}\ \delta(\tau_i\omega)\ \text{and}\ \delta(\tau_i\omega)+x.
	\end{align}
	
	\noindent
	\textbf{Step 1.} Liminf inequality.
	
	Let $v\in L^1(0,1)$ and let $(v_n)\subset L^1(0,1)$ be a sequence with $v_n\rightarrow v$ in $L^1(0,1)$. We have to show
	\begin{align}
	\label{liminfskaliert}
	\liminf_{n\rightarrow\infty}E_n^{\gamma_n}(\omega,v_n)\geq \underline{\alpha}\int_{0}^{1}\left|v'(x) \right|^2\,\mathrm{d}x+\beta\#S_v.
	\end{align}
	Without loss of generality we consider a (sub-)sequences with  $\sup_nE_n^{\gamma_n}(\omega,v_n)<\infty$. For such a sequence, the compactness result from Theorem~\ref{Thm:compactnessgammarescaled} provides $v\in SBV_c^{\gamma}(0,1)$ and the existence of a finite set $S=\{x_1,\ldots,x_N\}$ such that $v_n\rightharpoonup v$ locally weakly in $H^1((0,1)\setminus S)$. Now, let $\rho>0$ be such that $|x_i-x_j|>2\rho$ for all $x_i,x_j\in S$, $i\neq j$. We define
	\begin{align*}
	S_{\rho}&:=\bigcup_{i=1}^N(x_i-\rho,x_i+\rho),\\[1mm]
	Q_n(\rho)&:=\left\{i\in\{0,\ldots,n-1\}\, :\,(i,i+1)\lambda_n\subset(0,1)\setminus S_{\rho} \right\},\\[2mm]
	S_n(\rho)&:=\left\{i\in\{0,\ldots,n-1\}\, :\,i\notin Q_n(\rho) \right\}.
	\end{align*}
	The sets $S_n(\rho)$ and $Q_n(\rho)$ separate indices close to a jump of $v$ from those away from a jump. According to this, the energy can also be separated into
	\begin{align*}
	E_n^{\gamma_n}(\omega,v_n)=&\sum_{i\in Q_n(\rho)}\left(J\left(\tau_i\omega,\dfrac{v_n^{i+1}-v_n^{i}}{\sqrt{\lambda_n}}+\delta(\tau_i\omega) \right)-J\left(\tau_i\omega,\delta(\tau_i\omega) \right) \right)\\[1mm]
	&+\sum_{i\in S_n(\rho)}\left(J\left(\tau_i\omega,\dfrac{v_n^{i+1}-v_n^{i}}{\sqrt{\lambda_n}}+\delta(\tau_i\omega) \right)-J\left(\tau_i\omega,\delta(\tau_i\omega) \right) \right).
	\end{align*} 
	We now show that
	\begin{align}
	\label{zielliminfQ}
	\liminf_{n\rightarrow\infty}\sum_{i\in Q_n(\rho)}\left(J\left(\tau_i\omega,\dfrac{v_n^{i+1}-v_n^{i}}{\sqrt{\lambda_n}}+\delta(\tau_i\omega) \right)-J\left(\tau_i\omega,\delta(\tau_i\omega) \right) \right)\geq\underline{\alpha}\int_{(0,1)\setminus S_{\rho}}|v'(x)|^2\,\mathrm{d}x
	\end{align}
	and
	\begin{align}
	\label{zielliminfS}
	\liminf_{n\rightarrow\infty}\sum_{i\in S_n(\rho)}\left(J\left(\tau_i\omega,\dfrac{v_n^{i+1}-v_n^{i}}{\sqrt{\lambda_n}}+\delta(\tau_i\omega) \right)-J\left(\tau_i\omega,\delta(\tau_i\omega) \right) \right)\geq\beta\#S_v,
	\end{align}
	which provides \eqref{liminfskaliert} as $\rho\rightarrow0$ and by using $v'\in L^2(0,1)$.\\
	
	\noindent
	\textit{Step A: Proof of the elastic part of the energy \eqref{zielliminfQ}.}
	
	 With the help of $M\in\mathbb{N}$ we introduce a coarser scale and  define
	\begin{align*}
	I_n&:=\left\{i\in\{0,\ldots,n-1\}\, :\,\left|\dfrac{v_n^{i+1}-v_n^{i}}{\lambda_n} \right|>\lambda_n^{-\frac{1}{8}} \right\},\\[3mm]
	I_{n,M}&:=\bigg\{j\in\{0,\ldots,n-1\}\cap M\mathbb{Z}\, :\,\{j,\ldots,j+M-1\}\cap I_n\neq\emptyset \bigg\},\\[3mm]
	\chi_n(x)&:=\begin{cases}
	1&\quad\text{if}\quad x\in[i,i+1)\lambda_n\ \text{and}\ i\in \mathbb{Z}\setminus I_{n},\\
	0&\quad\text{if}\quad x\in[i,i+1)\lambda_n\ \text{and}\ i\in I_{n},
	\end{cases}\\[3mm]
	\chi_{n,M}(x)&:=\begin{cases}
	1&\quad\text{if}\quad x\in[j,j+M)\lambda_n\ \text{and}\ j\in M\mathbb{Z}\setminus I_{n,M},\\
	0&\quad\text{if}\quad x\in[j,j+M)\lambda_n\ \text{and}\ j\in I_{n,M}.
	\end{cases}
	\end{align*}
	The choice of the exponent $-1/8$ is of technical nature and fits to the derivation of the liminf and limsup estimates. 
	By the positivity of all summands and \eqref{energieabschaetzungmitK}, we obtain for $n$ large enough
	\begin{align*}
	E_n^{\gamma_n}(\omega,v_n)&\geq\sum_{i=0}^{n-1}\left(\lambda_nK_1\left(\dfrac{v_n^{i+1}-v_n^{i}}{\lambda_n} \right)^2\wedge K_2\right)\geq\sum_{i\in I_n}\left(\lambda_nK_1\left(\dfrac{v_n^{i+1}-v_n^{i}}{\lambda_n} \right)^2\wedge K_2\right)\\[2mm]
	&\geq\#I_n\lambda_nK_1\left(\lambda_n^{-\frac{1}{8}}\right)^2=\#I_nK_1\lambda_n^{\frac{3}{4}},
	\end{align*}
	which shows $\#I_n=\mathcal{O}(\lambda_n^{-\frac{3}{4}})$ due to the equiboundedness of the energy. Furthermore, we have
	\begin{align*}
	\left|\{x\in(0,1)\, :\,\chi_n(x)\neq1 \} \right|\leq\left|\{x\in(0,1)\, :\,\chi_{n,M}(x)\neq1 \} \right|\leq M\#I_n\cdot\lambda_n,
	\end{align*}
	i.e.~$\chi_n\rightarrow1$ and $\chi_{n,M}\rightarrow1$ bounded in measure in $(0,1)$ as $ n\rightarrow\infty$.

	We set $\chi_n^{i}:=\chi_n(i\lambda_n)$ and likewise for $\chi_{n,M}$. By the Taylor expansion \eqref{taylor}, the energy then reads
	\begin{align}
	\label{twoterms}
	\begin{split}
	&\sum_{i\in Q_n(\rho)}\left(J\left(\tau_i\omega,\dfrac{v_n^{i+1}-v_n^{i}}{\sqrt{\lambda_n}}+\delta(\tau_i\omega) \right)-J\left(\tau_i\omega,\delta(\tau_i\omega) \right) \right)\\[2mm]
	&= \sum_{i\in Q_n(\rho)}\left( \alpha(\tau_i\omega)\left(\dfrac{v_n^{i+1}-v_n^{i}}{\sqrt{\lambda_n}}\right)^2+\eta\left(\tau_i\omega,\dfrac{v_n^{i+1}-v_n^{i}}{\sqrt{\lambda_n}} \right) \right)\\[2mm]
	&\geq \sum_{i\in Q_n(\rho)}\left(\chi_n^{i} \alpha(\tau_i\omega)\lambda_n\left(\dfrac{v_n^{i+1}-v_n^{i}}{\lambda_n}\right)^2+\chi_n^{i}\eta\left(\tau_i\omega,\dfrac{v_n^{i+1}-v_n^{i}}{\sqrt{\lambda_n}} \right) \right).
	\end{split}
	\end{align}
	We consider both terms of the sum separately in the next two steps. This is possible because the second term vanishes as $n\rightarrow\infty$ and we can use $\liminf_{n\to\infty}(a_n+b_n)=\liminf_{n\to\infty}(a_n)+b$ for $b_n\to b$.\\

	\noindent
	\textit{Step B: Proof of the elastic part of the energy \eqref{zielliminfQ} -- Second addend of \eqref{twoterms}.} 
	
	In order to show convergence to zero of the second term on the right hand side of \eqref{twoterms}, we use the Lagrange form of the remainder from \eqref{lagrangeremainder} and get, with $\xi_i$ between $\delta(\tau_i\omega)$ and $\delta(\tau_i\omega)+\frac{v_n^{i+1}-v_n^i}{\sqrt{\lambda_n}}$,
	\begin{align*}
	\sum_{i\in Q_n(\rho)}\chi_n^{i}\eta\left(\tau_i\omega,\dfrac{v_n^{i+1}-v_n^{i}}{\sqrt{\lambda_n}} \right)&=\sum_{i\in Q_n(\rho)}\chi_n^{i}\frac{1}{6}\left.\frac{\partial^3 J(\tau_i\omega,y)}{\partial y^3}\right|_{y=\xi_i}\left(\dfrac{v_n^{i+1}-v_n^{i}}{\sqrt{\lambda_n}} \right)^3.
	\end{align*}
	For all $i\in Q_n(\rho)$ with $\chi_n^i(x)\neq0$ we have $\left|\frac{v_n^{i+1}-v_n^{i}}{\lambda_n}\right|\leq\lambda_n^{-\frac{1}{8}}$ and hence  $\left|\frac{v_n^{i+1}-v_n^{i}}{\sqrt{\lambda_n}}\right| 
	\leq\lambda_n^{\frac{3}{8}}$. Therefore, $\xi_i\in[\delta(\tau_i\omega),\delta(\tau_i\omega)+\kappa]\subset [\delta(\tau_i\omega)-\kappa,\delta(\tau_i\omega)+\kappa]$ for $n$ large enough with $\kappa<\kappa^*$ from (R1). Thus, we obtain for $n$ large enough
	\begin{align*}
	\sum_{i\in Q_n(\rho)}\left|\chi_n^{i}\eta\left(\tau_i\omega,\dfrac{v_n^{i+1}-v_n^{i}}{\sqrt{\lambda_n}} \right)\right|&\leq\sum_{i\in Q_n(\rho)}\frac{1}{6}\left|\left.\frac{\partial^3 J(\tau_i\omega,y)}{\partial y^3}\right|_{y=\xi_i}\right|\left(\lambda_n^{\frac{3}{8}}\right)^3\\[2mm]
	&\leq\frac{1}{6}\lambda_n^{\frac{1}{8}}\lambda_n\sum_{i=0}^{n-1}\sup_{x\in[\delta(\tau_i\omega)-\kappa,\delta(\tau_i\omega)+\kappa]}\left|\left.\frac{\partial^3 J(\tau_i\omega,y)}{\partial y^3}\right|_{y=x}\right|\leq C\lambda_n^{\frac{1}{8}},
	\end{align*} 
	where the last estimate is due to the convergence of the random variable $C^\kappa$ to its expectation value, see Proposition~\ref{Prop:averages2}. Therefore, the whole expression converges to zero as $n\to\infty$, which proves the assertion of convergence to zero of the second term to be correct.
	
	\medskip 
\begin{figure}[ht]
\centering
		\includegraphics[width=0.6\linewidth]{./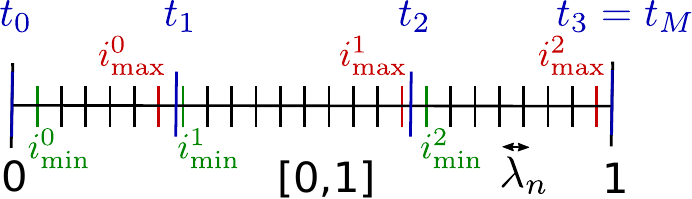}
		\caption{The definitions of $i_{\mathrm{min}}^m$ and $i_{\mathrm{max}}^m$ for $M=3$.}
	\label{fig:iminmaxreskaliert}
\end{figure}

	\noindent
	\textit{Step C: Proof of the elastic part of the energy \eqref{zielliminfQ} -- First addend of \eqref{twoterms}.} 
	
	We continue with the first part of the sum in \eqref{twoterms} and rearrange it. Following the construction in the proof of the liminf-inequality of Theorem~3.1 in \cite{unserPaper1}, we define for $\delta>0$ and $M\in\mathbb{N}$ the coarse grained grid $t_0,\ldots,t_M\in[0,1]$ with $t_0=0$, $t_M=1$ and $\delta<t_{m+1}-t_m<2\delta$ in such a way that $v_{n}(t_m)\rightarrow v(t_m)$ pointwise as $n\rightarrow\infty$ and for every $m=0,\ldots,N$. For better readability, we define $I_m:=[t_m,t_{m+1})$ for $m=0,\ldots,M$. Further, we set, cf.\ Figure~\ref{fig:iminmaxreskaliert},
	\begin{align*}
	i_{\mathrm{min}}^m:=\min\left\{i\, :\,i\in\mathbb{Z}\cap n I_m \right\}, 
	\qquad i_{\mathrm{max}}^m :=\max\left\{i\, :\,i\in\mathbb{Z}\cap n I_m \right\},
	\end{align*}
	which satisfy $\lambda_ni_{\mathrm{min}}^m\rightarrow t_m$ and $\lambda_ni_{\mathrm{max}}^m\rightarrow t_{m+1}$ as $n\rightarrow\infty$ by definition.
	The energy then reads
	\begin{align*}
	&\lefteqn{\sum_{i\in Q_n(\rho)}\hspace{-2mm}\lambda_n\chi_{n}^{i} \alpha(\tau_i\omega)\left(\dfrac{v_n^{i+1}-v_n^{i}}{\lambda_n}\right)^2\geq \sum_{i\in Q_n(\rho)}\hspace{-2mm}\lambda_n\chi_{n,M}^{i} \alpha(\tau_i\omega)\left(\dfrac{v_n^{i+1}-v_n^{i}}{\lambda_n}\right)^2}\\[1mm]
	&\geq\hspace{-4mm}\sum_{\footnotesize\substack{m=0\\ (nI_m\cap\mathbb{Z} )\subset Q_n(\rho)}}^{M}\hspace{-4mm}\chi_{n,M}^{m}\sum_{i\in\mathbb{Z}\cap nI_m}\hspace{-2mm}\lambda_n \alpha(\tau_i\omega)\left(\dfrac{v_n^{i+1}-v_n^{i}}{\lambda_n}\right)^2\\[1mm]
	&\geq\hspace{-4mm}\sum_{\footnotesize\substack{m=0\\ (nI_m\cap\mathbb{Z} )\subset Q_n(\rho)}}^{M}\hspace{-4mm}\chi_{n,M}^{m}\min\left\{\sum_{i\in\mathbb{Z}\cap nI_m}\hspace{-2mm}\lambda_n \alpha(\tau_i\omega)\left(\dfrac{v_n^{i+1}-v_n^{i}}{\lambda_n}\right)^2:\sum_{i\in\mathbb{Z}\cap nI_m}\hspace{-2mm}\dfrac{v_n^{i+1}-v_n^{i}}{\lambda_n}=n\left(v_n^{i_{\mathrm{max}}^{m}+1}-v_n^{i_{\mathrm{min}}^m}\right)\right\}\\[1mm]
	&\hspace{-0.4mm}\stackrel{(*)}{=}\hspace{-4mm}\sum_{\footnotesize\substack{m=0\\ (nI_m\cap\mathbb{Z} )\subset Q_n(\rho)}}^{M}\hspace{-4mm}\chi_{n,M}^{m}\lambda_n\left(i_{\mathrm{max}}^m-i_{\mathrm{min}}^m+1\right)\left(\dfrac{1}{i_{\mathrm{max}}^m-i_{\mathrm{min}}^m+1}\sum_{i\in\mathbb{Z}\cap nI_m}\dfrac{1}{\alpha(\tau_i\omega)}\right)^{-1}\left(\dfrac{v_n^{i_{\mathrm{max}}^{m}+1}-v_n^{i_{\mathrm{min}}^m}}{\lambda_n\left(i_{\mathrm{max}}^m-i_{\mathrm{min}}^m+1\right)} \right)^2,
	\end{align*}
	where $(*)$ is obtained as a direct calculation of the minimizer and the minimum. Now, we derive the limit $\liminf\limits_{n\to\infty}$. Therefore, note that 
	\begin{align*}
	\liminf\limits_{n\rightarrow\infty}\left(\dfrac{1}{i_{\mathrm{max}}^m-i_{\mathrm{min}}^m+1}\sum_{i\in\mathbb{Z}\cap nI_m}\dfrac{1}{\alpha(\tau_i\omega)}\right)^{-1}=\left(\mathbb{E}[\alpha^{-1}]\right)^{-1}.
	\end{align*}
	This follows from Proposition~\ref{Prop:averages2}, since $i_{\mathrm{max}}^m-i_{\mathrm{min}}^m+1=|\mathbb{Z}\cap nI_m|$. As a result, we get with $\liminf_{n}(a_nb_n)\geq\liminf_n(a_n)\cdot\liminf_n(b_n)$
	\begin{align*}
	\begin{split}
	&\liminf_{n\rightarrow\infty}\sum_{i\in Q_n(\rho)}\hspace{-2mm}\lambda_n\chi_n^{i} \alpha(\tau_i\omega)\left(\dfrac{v_n^{i+1}-v_n^{i}}{\lambda_n}\right)^2\\[1mm]
	&\geq\left(\mathbb{E}\left[\alpha^{-1}\right]\right)^{-1}\liminf_{n\rightarrow\infty}\hspace{-4mm}\sum_{\footnotesize\substack{m=0\\ (nI_m\cap\mathbb{Z} )\subset Q_n(\rho)}}^{M}\hspace{-4mm}\chi_{n,M}^{m}\lambda_n\left(i_{\mathrm{max}}^m-i_{\mathrm{min}}^m+1\right)\left(\dfrac{v_n^{i_{\mathrm{max}}^{m}+1}-v_n^{i_{\mathrm{min}}^m}}{\lambda_n\left(i_{\mathrm{max}}^m-i_{\mathrm{min}}^m+1\right)} \right)^2.
	\end{split}	
	\end{align*}
	For the next term, since by construction it holds true that $\lambda_n\left(i_{\mathrm{max}}^m-i_{\mathrm{min}}^m+1\right)\rightarrow\left(t_{m+1}-t_m\right)$, we can again follow the proof of the liminf-inequality of Theorem~3.1 in \cite{unserPaper1} and obtain pointwise convergence of
	\begin{align}
	\label{pointwiseconvergence}
	\dfrac{v_n^{i_{\mathrm{max}}^{m}+1}-v_n^{i_{\mathrm{min}}^m}}{\lambda_n\left(i_{\mathrm{max}}^m-i_{\mathrm{min}}^m+1\right)}\rightarrow\dfrac{v_M(t_{m+1})-v_M(t_m)}{t_{m+1}-t_m},
	\end{align}
	since it holds true that $i_{\mathrm{max}}^m+1=i_{\mathrm{min}}^{m+1}$ and $v_M$ is defined as the piecewise affine interpolation of $v$ with grid points $t_m$. For writing the sum as an integral, we define
	\begin{align*}
	S_{\rho,M,n}&:=\bigcup_{\footnotesize\substack{\footnotesize\substack{m=0\\ I_m\cap S_n(\rho)\neq\emptyset}}}^M I_m=\bigcup_{\footnotesize\substack{m=0\\ I_m\cap S_n(\rho)\neq\emptyset}}^M\left[t_m,t_{m+1}\right),\\[1mm]
	S_{\rho,M}&:=\bigcup_{\footnotesize\substack{m=0\\ I_m\cap S_{\rho}\neq\emptyset}}^M I_m=\bigcup_{\footnotesize\substack{m=0\\I_m\cap S_{\rho}\neq\emptyset}}^M\left[t_m,t_{m+1}\right),
	\end{align*}
	where, by definition, $S_{\rho,M,n}= S_{\rho,M}$ holds true for $n$ large enough. Therefore, with the definition of $v_{n,M}$ as the piecewise affine interpolation of $v_n$ with respect to $i_{\mathrm{min}}^m$, we get	
	\begin{align}
	\label{fastzielQ}
	\begin{split}
	\liminf_{n\rightarrow\infty}\hspace{-1mm}\sum_{i\in Q_n(\rho)}\hspace{-2mm}\lambda_n\chi_n^{i} \alpha(\tau_i\omega)\left(\dfrac{v_n^{i+1}-v_n^{i}}{\lambda_n}\right)^2&\geq\left(\mathbb{E}\left[\alpha^{-1}\right]\right)^{-1}\liminf_{n\rightarrow\infty}\int_{(0,1)\setminus S_{\rho,M,n}} \chi_{n,M}|v_{n,M}'|^2\,\mathrm{d}x\\[1mm]
	&\geq\left(\mathbb{E}[\alpha^{-1}]\right)^{-1}\int_{(0,1)\setminus S_{\rho,M}}\left|v_M'(x)\right|^2\,\mathrm{d}x,
	\end{split}
	\end{align}
	where the last inequality follows from the weak lower semicontinuity of the $L^2$ norm and because we have $\chi_{n,M}\rightarrow1$ bounded in measure, $v'_{n,M}\rightarrow v'_M$ in $L^2$ by \eqref{pointwiseconvergence} and thus $\chi_{n,M}v'_{n,M}\rightharpoonup v_M'$ in $L^2$.

	It remains to perform the limit $\liminf\limits_{M\rightarrow\infty}$. Since the left hand side of \eqref{fastzielQ} is independent of $M$ we only have to consider
	\begin{align}
	\label{limMzwei}
\liminf\limits_{M\rightarrow\infty}\int_{(0,1)\setminus S_{\rho,M}}\left|v_M'(x)\right|^2\,\mathrm{d}x
	&\geq\liminf\limits_{M\rightarrow\infty}\int_{(0,1)\setminus S_{\rho}}\left|v_M'(x)\right|^2\,\mathrm{d}x-\limsup\limits_{M\rightarrow\infty}\int_{S_{\rho,M}\setminus S_{\rho}}\left|v_M'(x)\right|^2\,\mathrm{d}x.
	\end{align}
	Step E of the liminf-inequality in the proof of  \cite[Theorem~3.1]{unserPaper1} shows that $v_M\rightharpoonup^* v$ in $BV$, and therefore $v_M\to v$ in $L^1(0,1)$. The compactness result in Theorem~\ref{Thm:compactnessgammarescaled} further yields $v'\in L^2(0,1)$. For an interval $(a,b)\subset(0,1)\setminus S_{\rho}$ we define the coarser grid points as before, with $t_0=a$ and $t_M=b$. Using the Hölder inequality, which is possible due to $v'=\nabla v$ on $(a,b)$, we get the uniform bound
	\begin{align*}
	\lVert v_M'\rVert^2_{L^2(a,b)}&=\int_{a}^{b}|v_M'(x)|^2\,\mathrm{d}x=\sum_{m=0}^{M}\left( t_{m+1}-t_m \right)\left|\frac{v(t_{m+1})-v(t_m)}{t_{m+1}-t_m} \right|^2\\[2mm]
	&=\sum_{m=0}^{M}\frac{1}{t_{m+1}-t_m}\left|\int_{t_{m}}^{t_{m+1}}v'(x)\,\mathrm{d}x \right|^2\leq \sum_{m=0}^{M}\int_{t_{m}}^{t_{m+1}}\left|v'(x)\right|^2\,\mathrm{d}x= \lVert v'\rVert^2_{L^2(a,b)},
	\end{align*}
	which yields $v_M'\to v'$ in $L^2(a,b)$ as $M\to\infty$. This result can be applied to $(0,1)\setminus S_\rho$ and reads $v_M'\to v'$ in $L^2((0,1)\setminus S_\rho)$ as $M\to\infty$. Since the integral functional is lower semicontinuous, we can estimate the first term of the right hand side of \eqref{limMzwei} by
	\begin{align*}
	\liminf\limits_{M\rightarrow\infty}\int_{(0,1)\setminus S_{\rho}}\left|v_M'(x)\right|^2\,\mathrm{d}x\geq\int_{(0,1)\setminus S_{\rho}}\left|v'(x) \right|^2\,\mathrm{d}x.
	\end{align*}
	The second part of \eqref{limMzwei} fulfils
	\begin{align*}
	\limsup\limits_{M\rightarrow\infty}\int_{S_{\rho,M}\setminus S_{\rho}}\left|v_M'(x)\right|^2\,\mathrm{d}x=0,
	\end{align*}
	which can be seen as follows: We assume that $S_{\rho}$ consists only of one interval (which corresponds to $S=\{x_1\}$) and note that the proof for finitely many intervals is analogous. By construction, there exist sequences $k_M$ and $\ell_M$ such that
	$S_{\rho,M}\setminus S_{\rho}\subset\left(\left[t_{k_M},t_{k_M+1} \right]\cup\left[t_{\ell_M},t_{\ell_M+1} \right] \right)$
	and 
	\begin{align}
	\label{integralgrenzenkonv}
	\begin{split}
	t_{k_M}\rightarrow x_i-\rho,&\quad t_{k_M+1}\rightarrow x_i-\rho,\\[1mm]
	t_{\ell_M}\rightarrow x_i+\rho,&\quad t_{\ell_M+1}\rightarrow x_i+\rho\quad\text{as}\ M\rightarrow\infty.
	\end{split}
	\end{align}
	Therefore, it holds true that
	\begin{align*}
	\int_{S_{\rho,M}\setminus S_{\rho}}\left|v_M'(x)\right|^2\,\mathrm{d}x&\leq\int_{t_{k_M}}^{t_{k_M+1}}\left|v_M'(x)\right|^2\,\mathrm{d}x+\int_{t_{\ell_M}}^{t_{\ell_M+1}}\left|v_M'(x)\right|^2\,\mathrm{d}x.
	\end{align*}
	We are going to consider only one of these two terms, because they have basically the same structure, and show that it converges to zero. Observing that the integration area is contained in $(0,1)\setminus S$ for $M\gg\rho$ (in fact, for $2\delta<\rho$) and therefore $v$ can be assumed as the absolutely continuous representative, we get with the Hölder inequality
	\begin{align*}
	&\int_{t_{k_M}}^{t_{k_M+1}}\left|v_M'(x)\right|^2\,\mathrm{d}x
	=\dfrac{1}{t_{k_M+1}-t_{k_M}}\left|\int_{t_{k_M}}^{t_{k_M+1}}v'(x)\,\mathrm{d}x\right|^2\leq \dfrac{1}{t_{k_M+1}-t_{k_M}}\left(\int_{t_{k_M}}^{t_{k_M+1}}\left|v'(x)\right|\,\mathrm{d}x\right)^2\\[2mm]
	&\leq\dfrac{1}{t_{k_M+1}-t_{k_M}}\left[\left(\int_{t_{k_M}}^{t_{k_M+1}}\left|v'(x)\right|^2\,\mathrm{d}x\right)^{\frac{1}{2}}\left(t_{k_M+1}-t_{k_M}\right)^{\frac{1}{2}} \right]^2=\int_{t_{k_M}}^{t_{k_M+1}}\left|v'(x)\right|^2\,\mathrm{d}x.
	\end{align*}
	Since $v$ is absolutely continuous, the integral functional is continuous with respect to its integral bounds due to the fundamental theorem of calculus. Together with \eqref{integralgrenzenkonv}, this shows
	\begin{align*}
	\limsup_{M\rightarrow\infty}\int_{t_{k_M}}^{t_{k_M+1}}\left|v'(x)\right|^2\,\mathrm{d}x=\lim\limits_{M\rightarrow\infty}\int_{t_{k_M}}^{t_{k_M+1}}\left|v'(x)\right|^2\,\mathrm{d}x=0.
	\end{align*}
	Summarizing steps A--C, we obtain \eqref{zielliminfQ}.
	
	\medskip
	
	\noindent
	\textit{Step D: Proof of the jump part of the energy \eqref{zielliminfS}.} 
	
	It is left to show
	\begin{align*}
	\liminf_{n\rightarrow\infty}\sum_{i\in S_n(\rho)}\left(J\left(\tau_i\omega,\dfrac{v_n^{i+1}-v_n^{i}}{\sqrt{\lambda_n}}+\delta(\tau_i\omega) \right)-J\left(\tau_i\omega,\delta(\tau_i\omega) \right) \right)\geq\beta\#S_v.
	\end{align*}
	According to \cite[(117)]{BraidesLewOrtiz2006}, one can find a sequence $(h_n^t)\subset\mathbb{N}$ for every $t\in S_v$ with $\lambda_n h_n^t\rightarrow t$ as $n\rightarrow\infty$ such that
	\begin{align}
	\label{sprungfolgekonv}
	\lim\limits_{n\rightarrow \infty}\dfrac{v_n^{h_n^t+1}-v_n^{h_n^t}}{\sqrt{\lambda_n}}=+\infty.
	\end{align}
	Especially, $h_n^t\notin Q_n(\rho)$ holds true for $n$ large enough.
	The existence of such a sequence can be seen by a contradiction argument: If this did not exist, we would get $v_n'<C/\sqrt{\lambda_n}$ in a neighbourhood $(t-\xi,t+\xi)$ of $t$. Following step 1 of the proof of Theorem~\ref{Thm:compactnessgammarescaled}, this would imply $\int_{t-\xi}^{t+\xi}|v_n'|^2\,\mathrm{d}t\leq CE_n^{\gamma_n}(\omega,v_n)$, and therefore $v_n$ would be equibounded in $H^1(0,1)$ in a neighbourhood of $t$. Therefore, we get
	\begin{align*}
	&\sum_{i\in S_n(\rho)}\left(J\left(\tau_i\omega,\dfrac{v_n^{i+1}-v_n^{i}}{\sqrt{\lambda_n}}+\delta(\tau_i\omega) \right)-J\left(\tau_i\omega,\delta(\tau_i\omega) \right) \right)\\[1mm]
	&\geq\sum_{t\in S_v}\left(J\left(\tau_{h_n^t}\omega,\dfrac{v_n^{h_n^t+1}-v_n^{h_n^t}}{\sqrt{\lambda_n}}+\delta\left(\tau_{h_n^t}\omega\right) \right)-J\left(\tau_{h_n^t}\omega,\delta\left(\tau_{h_n^t}\omega\right) \right) \right)\\[1mm]
	&\geq\sum_{t\in S_v}J\left(\tau_{h_n^t}\omega,\dfrac{v_n^{h_n^t+1}-v_n^{h_n^t}}{\sqrt{\lambda_n}}+\delta\left(\tau_{h_n^t}\omega\right) \right)+\sum_{t\in S_v}\inf\{-J\left(\omega,\delta(\omega) \right)\, :\,\omega\in\Omega \}\\[1mm]
	&=\sum_{t\in S_v}J\left(\tau_{h_n^t}\omega,\dfrac{v_n^{h_n^t+1}-v_n^{h_n^t}}{\sqrt{\lambda_n}}+\delta\left(\tau_{h_n^t}\omega\right) \right)+\beta\#S_v.
	\end{align*}
	By taking $\liminf_{n\rightarrow\infty}$, the first term vanishes because of \eqref{sprungfolgekonv} and (R2). 
	
	\medskip
	
	\noindent
	\textbf{Step 2.} Limsup inequality.
	
	We have to show that for every $v\in SBV_c^{\gamma}$ there exists a sequence $v_n$ with $v_n\rightarrow v$ in $L^1(0,1)$ such that
	$\limsup_{n\rightarrow\infty}E_n^{\gamma_n}(\omega,v_n)\leq E^{\gamma}(v)$.
	Without loss of generality, we consider $\# S_v=1$ to keep the notation simple. The extension to the case $\# S_v>1$ can easily be proven since we construct the recovery sequence step by step, starting from affine functions and glueing them together to a piecewise affine function. The case $\# S_v=0$ is included by setting the jump height to zero. 
	
	We already know from the compactness result in Theorem~\ref{Thm:compactnessgammarescaled} that $v$ is piecewise $H^1(0,1)$. Therefore, we can write $v=v_c+v_j$, where $v_c\in H^1(0,1)$ and $v_j$ is a piecewise constant function. By a density argument, we can assume $v_c\in C^{2}[0,1]$. Note that this approximation can be chosen in such a way, that it keeps the boundary values, see e.g.~\cite[Section 2.4, Cor.~3]{Burenkov}. We first provide a recovery sequence for affine functions with a single jump and extend it afterwards to the general case.
	
	\medskip
	
	\noindent
	\textit{Step A: Affine function.}
	
	We construct a recovery sequence for an affine function $v$ with slope $z$, a jump in $0$ and constant near the jump. That is, for $z\in \mathrm{dom} J_j$ and a small $\rho>0$, we have
	\begin{align*}
	v(x)=\begin{cases}
	0&\quad\text{for}\ x=0,\\
	v(0^+)&\quad\text{for}\ x\in(0,\rho),\\
	v(0^+)+(x-\rho)z&\quad\text{for}\ x\in[\rho,1],
	\end{cases}
	\end{align*}
	with $v(0^+)>0$ defined such that
	\begin{align}
	\label{boundaryvaluev}
	v(1)=v(0^+)+(1-\rho)z=\gamma,
	\end{align}
	in order to fulfil the boundary constraint as well as the assumption $\# S_v=1$ and $[v]\geq 0$ from the compactness result in Theorem~\ref{Thm:compactnessgammarescaled}. Without loss of generality, the jump is at $0$. A jump at $1$ can be constructed analogously, with small changes. Whenever the location of the jump is important in the proof (cf.\ \eqref{limesbeta}), we highlight it and provide the proof in a general way.
	
	There exists a unique sequence $T_n$ such that $\rho\in[T_n,T_n+1)\lambda_n$. First, we consider $\epsilon>0$ fixed such that $\epsilon<\rho$ and $\epsilon<T_n\lambda_n$ and define
	\begin{align} \label{hnepsilon}
	h_n^{\epsilon}:=\mathrm{argmin}_{0\leq i\leq n-1}\left\{-J(\tau_i\omega,\delta(\tau_i\omega))\, :\,\left|i\lambda_n-0 \right|<\epsilon \right\}.
	\end{align}
	Let $(\mu_k)_{k\in\mathbb{N}}>0$ be a sequence in $\mathbb{R}$ with $\mu_k=\frac{1-\rho}{k}$ for all $k\in\mathbb{N}$. We define a partition of the interval $(\rho,1]$ by $I_{j}^{\mu_k}:=(\rho+j\mu_k,\rho+(j+1)\mu_k]$ with $j=0,\ldots,\frac{1-\rho}{\mu_k}-1$. Hence, the set
	\begin{align*}
	\left\{I_j^{\mu_k} \, :\, \mu_k=\frac{1-\rho}{k},\ j=0,\ldots,k-1,\ k\in\mathbb{N} \right\}
	\end{align*}
	is a countable set of sets. Thus, for all $I=I_j^{\mu_k}$ with $\mu_k=\frac{1}{k}$, $k\in\mathbb{N}$ and $j=0,\ldots,k-1$, we can pass to the limit in the sense of Tempel'man's ergodic theorem \cite[Theorem~2.8]{Krengel}. Thereby, the set $\Omega'$, for which the ergodic result holds true, is the intersection of countably many sets $\Omega_I$. In the following, we leave out the index $k$ and just refer to the sequence $\mu_k$ by $\mu$.
	From now on, let $\mu$ be fixed. We define for $j=0,\ldots,(1-\rho)/\mu-1=:j_{\mathrm{max}}$
	\begin{align*}
	I_{j}^{\mu}&:=(\rho+j\mu,\rho+(j+1)\mu],\\[1mm]
	I_{j,n}^{\mu}&:=\mathbb{Z}\cap nI_{j}^{\mu},\\[1mm]
	i_{\mathrm{min}}^{j,n}&:=\min\{i\, :\,i\in \mathbb{Z}\cap n I_{j}^{\mu}\},\\[1mm]
	i_{\mathrm{max}}^{j,n}&:=\max\{i\, :\,i\in \mathbb{Z}\cap n I_{j}^{\mu}\},\\[1mm]
	I_{j,n}^{\mu*}&:=I_{j,n}^{\mu}\cup\{i_{\mathrm{min}}^{j,n}-1 \}\setminus\{i_{\mathrm{max}}^{j,n} \}.
	\end{align*}
	Note that $\cup_{j=0}^{j_{\mathrm{max}}}I_{j,n}^{\mu*}=\{T_n,\ldots,n-1\}$ holds true.
	With this notation, we define two sequences of piecewise affine functions, $(\varphi_n)$ and $(\phi_n)$, which together (almost) form the recovery sequence $(v_n)$, defined by
	\begin{align*}
	v_n:=\varphi_n+\phi_n.	
	\end{align*}
	
	To be precise, the sequence $v_n$ has to be a sequence $v_{n,\mu}$, depending on $\mu$. Since this does not affect most of the calculations, we drop the subscript $\mu$ whenever it is not relevant. In the last step of the proof, we get from the Attouch Lemma \cite[Corollary~1.16]{Attouch1984} the existence of a sequence $\mu_n$, such that $(v_{n,\mu_n})$ finally is the desired recovery sequence. We first define $(\varphi_n)$, which accounts for the jump and the boundary conditions, by
	\begin{align*}
	\varphi_n^i:=\begin{cases}
	v(0^-)=0&\ \text{for}\ 0\leq i\leq h_n^{\epsilon},\\
	v(0^+)+\gamma_n-\gamma&\ \text{for}\ h_n^{\epsilon}<i\leq n.
	\end{cases}
	\end{align*}
	
	Since later we extend this construction to piecewise affine functions, we have to take care of the boundary values $\gamma_n$ and $\gamma$. They have to be understood as the boundary data of the considered interval. That is, for an interval which does not contain $x=0$ or $x=1$, it holds true that $\gamma_n=\gamma$ and therefore the term $\gamma_n-\gamma$ cancels out.
	
	The sequence $(\phi_n)$ optimizes the elastic energy and is given by
	\begin{align*}
	\phi_n^i:=\begin{cases}
	0&\ \text{for}\quad 0\leq i\leq h_n^{\epsilon},\\
	z\left(T_n\lambda_n-\rho \right)&\ \text{for}\quad h_n^\epsilon<i\leq T_n,\\[-1mm]
	z\left((i_{\mathrm{min}}^{j,n}-1)\lambda_n-\rho \right)+\left(\dfrac{1}{|I_{j,n}^{\mu*}|}\displaystyle\sum_{k\in I_{j,n}^{\mu*}}\dfrac{1}{\alpha(\tau_k\omega)} \right)^{-1}\hspace{-3mm}\lambda_nz\hspace{-2mm}\displaystyle\sum_{k=i_{\mathrm{min}}^{j,n}-1}^{i-1}\dfrac{1}{\alpha(\tau_k\omega)}&\ \text{for}\quad i\in I_{j,n}^{\mu}.
	\end{cases}
	\end{align*}
	Note that the definition provides for every $j=0,\ldots,(1-\rho)/\mu-1$
	\begin{align}
	\label{boundaryvalueinterval}
	\begin{split}
	v_n^{i_{\mathrm{max}}^{j,n}}&=\varphi_n^{i_{\mathrm{max}}^{j,n}}+\phi_n^{i_{\mathrm{max}}^{j,n}}\\[-3mm]
	&=v(0^+)+\gamma_n-\gamma+z\left((i_{\mathrm{min}}^{j,n}-1)\lambda_n-\rho \right)+\left(\dfrac{1}{|I_{j,n}^{\mu*}|}\displaystyle\sum_{k\in I_{j,n}^{\mu*}}\dfrac{1}{\alpha(\tau_k\omega)} \right)^{-1}\hspace{-3mm}\lambda_nz\hspace{-2mm}\sum_{k=i_{\mathrm{min}}^{j,n}-1}^{i_{\mathrm{max}}^{j,n}-1}\dfrac{1}{\alpha(\tau_k\omega)}\\[1mm]
	&=v(0^+)+\gamma_n-\gamma+z\left((i_{\mathrm{min}}^{j,n}-1)\lambda_n-\rho \right)+\lambda_nz|I_{j,n}^{\mu*}|\\[1mm]
	&=v(0^+)+\gamma_n-\gamma+z\left((i_{\mathrm{min}}^{j,n}-1)\lambda_n-\rho \right)+\lambda_nz\left(i_{\mathrm{max}}^{j,n}-i_{\mathrm{min}}^{j,n}+1  \right)\\[1mm]
	&=v(0^+)+\gamma_n-\gamma+z\left((i_{\mathrm{max}}^{j,n})\lambda_n-\rho \right) =v^{i_{\mathrm{max}}^{j,n}}+\gamma_n-\gamma.
	\end{split}
	\end{align}
	Therefore, $v_n$ and $v$ coincide at the value $i_{\mathrm{max}}^{j,n}$ up to their boundary conditions. Together, the sequence $(v_{n,\mu_n})$ of piecewise affine functions $v_n\in \mathcal{A}_n^{\gamma_n}(0,1)$,  with $v_n(0)=0$, $v_n(1)=\gamma_n$ and $v_n:=\varphi_n+\phi_n$, is the recovery sequence for a well-chosen $\mu_n$. To prove this, we have to show that (a) $v_n$ fulfils the boundary conditions, (b) the limsup inequality is satisfied and (c) $v_n\rightarrow v$ in $L^1(0,1)$.
	
	\medskip 
	
	\noindent
	(a) We consider the point $i=n$ because $v(1)=v_n^n$. Since $n=i_{\mathrm{max}}^{j_{\mathrm{max}},n}$, by \eqref{boundaryvalueinterval} we get $v_n^n=v_n^{i_{\mathrm{max}}^{j_{\mathrm{max}},n}}=v(0^+)+z(n\lambda_n-\rho)+\gamma_n-\gamma\stackrel{\eqref{boundaryvaluev}}{=}\gamma_n$. Thus, the boundary condition is fulfilled.\\
	
	\noindent
	(b) We have
	\begin{align*}
	E_n^{\gamma_n}(\omega,v_n)&=\sum_{i=0}^{n-1}\left(J\left(\tau_i\omega,\dfrac{v_n^{i+1}-v_n^{i}}{\sqrt{\lambda_n}}+\delta(\tau_i\omega)\right)-J\left(\tau_i\omega,\delta(\tau_i\omega) \right)\right)\\[1mm]
	&=\sum_{i=0}^{h_n^{\epsilon}-1}\left(J\left(\tau_i\omega,\dfrac{v_n^{i+1}-v_n^{i}}{\sqrt{\lambda_n}}+\delta(\tau_i\omega)\right)-J\left(\tau_i\omega,\delta(\tau_i\omega) \right)\right)\\[1mm]
	&\quad\quad+\sum_{i=h_n^{\epsilon}+1}^{T_n-1}\left(J\left(\tau_i\omega,\dfrac{v_n^{i+1}-v_n^{i}}{\sqrt{\lambda_n}}+\delta(\tau_i\omega)\right)-J\left(\tau_i\omega,\delta(\tau_i\omega) \right)\right)\\[1mm]
	&\quad\quad+\sum_{i=T_n}^{n-1}\left(\lambda_n\alpha(\tau_i\omega)\left(\dfrac{v_n^{i+1}-v_n^{i}}{\lambda_n} \right)^2+\eta\left(\tau_i\omega,\dfrac{v_n^{i+1}-v_n^{i}}{\sqrt{\lambda_n}}\right)\right)\\[1mm]
	&\quad\quad+J\left(\tau_{h_n^{\epsilon}}\omega,\dfrac{v_n^{h_n^{\epsilon}+1}-v_n^{h_n^{\epsilon}}}{\sqrt{\lambda_n}}+\delta\left(\tau_{h_n^{\epsilon}}\omega\right)\right)-J(\tau_{h_n^{\epsilon}}\omega,\delta\left(\tau_{h_n^{\epsilon}}\omega\right)).
	\end{align*}	
	This energy has four parts. The first two parts (from zero to $h_n^{\epsilon}-1$ and from $h_n^{\epsilon}+1$ to $T_n-1$) are identically zero by definition of $v_n$. To get the limsup-inequality, we have to show the two inequalities
	\begin{align}
	\label{limsup1}
	\limsup_{n\to\infty}\sum_{i=T_n}^{n-1}\left(\lambda_n\alpha(\tau_i\omega)\left(\dfrac{v_n^{i+1}-v_n^{i}}{\lambda_n} \right)^2+\eta\left(\tau_i\omega,\dfrac{v_n^{i+1}-v_n^{i}}{\sqrt{\lambda_n}}\right)\right)\leq \underline{\alpha}\int_{0}^{1}|v'(x)|^2,
	\end{align}
	and
	\begin{align}
	\label{limsup2}
	\limsup_{n\to\infty}\left( J\left(\tau_{h_n^{\epsilon}}\omega,\dfrac{v_n^{h_n^{\epsilon}+1}-v_n^{h_n^{\epsilon}}}{\sqrt{\lambda_n}}+\delta\left(\tau_{h_n^{\epsilon}}\omega\right)\right)-J(\tau_{h_n^{\epsilon}}\omega,\delta\left(\tau_{h_n^{\epsilon}}\omega\right))\right)\leq \beta,
	\end{align}
	where the first one is the elastic part and the second one is the jump part of the limiting energy.\\
	
	\noindent
	\textit{Proof of equation \eqref{limsup1}, elastic part.} 
	
	We start with rearranging the sum, i.e.
	\begin{align*}
	&\sum_{i=T_n}^{n-1}\left(\lambda_n\alpha(\tau_i\omega)\left(\dfrac{v_n^{i+1}-v_n^{i}}{\lambda_n} \right)^2+\eta\left(\tau_i\omega,\dfrac{v_n^{i+1}-v_n^{i}}{\sqrt{\lambda_n}}\right)\right)\\[1mm]
	&=\sum_{j=0}^{j_{\mathrm{max}}}\sum_{i\in I_{j,n}^{\mu*}}\lambda_n\alpha(\tau_i\omega)\left(\dfrac{v_n^{i+1}-v_n^{i}}{\lambda_n} \right)^2+\sum_{j=0}^{j_{\mathrm{max}}}\sum_{i\in I_{j,n}^{\mu*}}\eta\left(\tau_i\omega,\dfrac{v_n^{i+1}-v_n^{i}}{\sqrt{\lambda_n}}\right).
	\end{align*}
	By the definition of $v_n$, the first term on the right hand side reads
	\begin{align*}
\sum_{j=0}^{j_{\mathrm{max}}}\sum_{i\in I_{j,n}^{\mu*}}\lambda_n\dfrac{1}{\alpha(\tau_i\omega)}\left(\dfrac{1}{|I_{j,n}^{\mu*}|}\sum_{k\in I_{j,n}^{\mu*}}\dfrac{1}{\alpha(\tau_k\omega)} \right)^{-2}z^2 
	=\sum_{j=0}^{j_{\mathrm{max}}}\lambda_n|I_{j,n}^{\mu*}|z^2\left(\dfrac{1}{|I_{j,n}^{\mu*}|}\sum_{k\in I_{j,n}^{\mu*}}\dfrac{1}{\alpha(\tau_k\omega)} \right)^{-1}.
	\end{align*}
Hence we obtain
	\begin{align}
	\label{energyiandii}
	\begin{split}
	&\sum_{i=T_n}^{n-1}\left(\lambda_n\alpha(\tau_i\omega)\left(\dfrac{v_n^{i+1}-v_n^{i}}{\lambda_n} \right)^2+\eta\left(\tau_i\omega,\dfrac{v_n^{i+1}-v_n^{i}}{\sqrt{\lambda_n}}\right)\right)\\[1mm]
	&=\sum_{j=0}^{j_{\mathrm{max}}}\lambda_n|I_{j,n}^{\mu*}|z^2\left(\dfrac{1}{|I_{j,n}^{\mu*}|}\sum_{k\in I_{j,n}^{\mu*}}\dfrac{1}{\alpha(\tau_k\omega)} \right)^{-1} \\[1mm]
	&\quad\quad+\sum_{j=0}^{j_{\mathrm{max}}}\sum_{i\in I_{j,n}^{\mu*}}\eta\left(\tau_i\omega,\sqrt{\lambda_n}\left(\dfrac{1}{|I_{j,n}^{\mu*}|}\sum_{k\in I_{j,n}^{\mu*}}\dfrac{1}{\alpha(\tau_k\omega)} \right)^{-1}z\dfrac{1}{\alpha(\tau_i\omega)}\right).
	\end{split}
	\end{align}
	Now we consider $\limsup_{n\rightarrow\infty}$ of \eqref{energyiandii}. The two parts of the sum are discussed separately in (i) and (ii) below. The first one becomes the elastic part of the energy and the second one vanishes.

	%
	
	\noindent
	(i) 
	We take the $\limsup\limits_{n\rightarrow\infty}$ of the first term of the right hand side of this equation and obtain with Proposition~\ref{Prop:averages2} and $\lambda_n|I_{j,n}^{\mu*}|\to\mu$ that
	\begin{align*}
	&\limsup_{n\to\infty}\sum_{j=0}^{j_{\mathrm{max}}}\lambda_n|I_{j,n}^{\mu*}|z^2\left(\dfrac{1}{|I_{j,n}^{\mu*}|}\sum_{k\in I_{j,n}^{\mu*}}\dfrac{1}{\alpha(\tau_k\omega)} \right)^{-1}\\[1mm]
	&\leq \sum_{j=0}^{j_{\mathrm{max}}}\left(\limsup_{n\to\infty}\lambda_n|I_{j,n}^{\mu*}|\right)z^2\limsup_{n\to\infty}\left(\dfrac{1}{|I_{j,n}^{\mu*}|}\sum_{k\in I_{j,n}^{\mu*}}\dfrac{1}{\alpha(\tau_k\omega)} \right)^{-1}\\[2mm]
	&=\dfrac{1-\rho}{\mu}\mu z^2\left(\mathbb{E}\left[\alpha^{-1}\right]\right)^{-1}=(1-\rho)z^2\left(\mathbb{E}\left[\alpha^{-1}\right]\right)^{-1}=\underline{\alpha}\int_{\rho}^{1}z^2\,\mathrm{d}x=\underline{\alpha}\int_{0}^{1}|v'(x)|^2\,\mathrm{d}x,
	\end{align*}
	$\mathbb{P}$-almost everywhere. This is exactly the result we expected in order to get \eqref{limsup1}. We now show that the remaining part of \eqref{energyiandii} vanishes, which will conclude the proof of \eqref{limsup1}.
	
	\medskip 
	
	\noindent
	(ii) To estimate the second part on the right hand side of \eqref{energyiandii}, we
we first consider the second argument of the function $\eta$. Since $\alpha(\omega)$ is bounded from below due to Remark~\ref{Rm:alphaneu} (iii), we get
	\begin{align*}
	\sqrt{\lambda_n}\left(\dfrac{1}{|I_{j,n}^{\mu*}|}\sum_{i\in I_{j,n}^{\mu*}}\dfrac{1}{\alpha(\tau_k\omega)} \right)^{-1}|z|\dfrac{1}{\alpha(\tau_i\omega)}\leq\sqrt{\lambda_n}C,
	\end{align*}
	because of the convergence of the sum to $\mathbb{E}\left[\alpha^{-1}\right]$, due to Proposition~\ref{Prop:averages2}. As before, we use the Lagrange form of the remainder from \eqref{lagrangeremainder} and get with $\xi_i\in[\delta(\tau_i\omega)-\sqrt{\lambda_n}C,\delta(\tau_i\omega)+\sqrt{\lambda_n}C]$ 
	\begin{align*}
	&\sum_{j=0}^{j_{\mathrm{max}}}\sum_{i\in I_{j,n}^{\mu*}}\eta\left(\tau_i\omega,\sqrt{\lambda_n}\left(\dfrac{1}{|I_{j,n}^{\mu*}|}\sum_{i\in I_{j,n}^{\mu*}}\dfrac{1}{\alpha(\tau_k\omega)} \right)^{-1}z\dfrac{1}{\alpha(\tau_i\omega)}\right)\\[1mm]
	&=\sum_{j=0}^{j_{\mathrm{max}}}\sum_{i\in I_{j,n}^{\mu*}}\frac{1}{6}\left.\frac{\partial^3J(\tau_i\omega,y)}{\partial y^3}\right|_{y=\xi_i}   \left(\sqrt{\lambda_n}\left(\dfrac{1}{|I_{j,n}^{\mu*}|}\sum_{i\in I_{j,n}^{\mu*}}\dfrac{1}{\alpha(\tau_k\omega)} \right)^{-1}z\dfrac{1}{\alpha(\tau_i\omega)}\right)^3.
	\end{align*}
	We can again use the estimate from above and get with $\kappa<\kappa^*$ from (R1) for $n$ large enough
	\begin{align*}
	&\sum_{j=0}^{j_{\mathrm{max}}}\sum_{i\in I_{j,n}^{\mu*}}\frac{1}{6}\left|\left.\frac{\partial^3J(\tau_i\omega,y)}{\partial y^3}\right|_{y=\xi_i}\right| \left(\sqrt{\lambda_n}\left(\dfrac{1}{|I_{j,n}^{\mu*}|}\sum_{i\in I_{j,n}^{\mu*}}\dfrac{1}{\alpha(\tau_k\omega)} \right)^{-1}|z|\dfrac{1}{\alpha(\tau_i\omega)}\right)^3\\[1mm]
	&\leq\sum_{i=0}^{n-1}\frac{1}{6}\left|\left.\frac{\partial^3J(\tau_i\omega,y)}{\partial y^3}\right|_{y=\xi_i}\right| \left(\sqrt{\lambda_n}C\right)^3\\[1mm]
	&\leq\frac{1}{6}C^3\lambda_n^{\frac{1}{2}}\lambda_n\sum_{i=0}^{n-1}\sup_{x\in[\delta(\tau_i\omega)-\kappa,\delta(\tau_i\omega)+\kappa]}\left|\left.\frac{\partial^3J(\tau_i\omega,y)}{\partial y^3}\right|_{y=x}\right|\leq \hat{C}\lambda_n^{\frac{1}{2}},
	\end{align*}
	where the last estimate is due to the convergence of the random variable $C^\kappa$ to its expectation value, see Proposition~\ref{Prop:averages2}. Therefore, the whole expression converges to zero, which concludes the proof of Equation \eqref{limsup1}.\\

	\noindent
	\textit{Proof of equation \eqref{limsup2}, jump part.} 
	
	The last remaining part of the energy is the limsup of
	\begin{align*}
	J\left(\tau_{h_n^{\epsilon}}\omega,\dfrac{v_n^{h_n^{\epsilon}+1}-v_n^{h_n^{\epsilon}}}{\sqrt{\lambda_n}}+\delta\left(\tau_{h_n^{\epsilon}}\omega\right)\right)-J(\tau_{h_n^{\epsilon}}\omega,\delta\left(\tau_{h_n^{\epsilon}}\omega\right)).
	\end{align*}
	We have
	\begin{align*}
	\dfrac{v_n^{h_n^{\epsilon}+1}-v_n^{h_n^{\epsilon}}}{\sqrt{\lambda_n}}=\dfrac{v(0^+)+\gamma_n-\gamma+z\left(T_n\lambda_n-\rho \right)}{\sqrt{\lambda_n}}\rightarrow\infty
	\end{align*}
	as $n\rightarrow\infty$ since $\gamma_n-\gamma\rightarrow0$, $T_n\lambda_n\to \rho$ and $v(0^+)> 0$. Therefore, we obtain
	\begin{align*}
	J\left(\tau_{h_n^{\epsilon}}\omega,\dfrac{v_n^{h_n^{\epsilon}+1}-v_n^{h_n^{\epsilon}}}{\sqrt{\lambda_n}}+\delta\left(\tau_{h_n^{\epsilon}}\omega\right)\right)\rightarrow0
	\end{align*}
	due to (R2). By definition of $h_n^{\epsilon}$ it holds true that
	\begin{align*} 
	-J(\tau_{h_n^{\epsilon}}\omega,\delta\left(\tau_{h_n^{\epsilon}}\omega\right))=\inf_{0\leq i\leq n-1}\left\{-J(\tau_{i}\omega,\delta(\tau_{i}\omega))\, :\, |i\lambda_n-0|< \epsilon \right\} =: \beta_n(\omega,x,\epsilon).
	\end{align*}
	In our case, the jump is at $x=0$. Since we will extend this construction to piecewise affine functions, the jump needs also to be allowed to be located at any point in the interval $[0,1]$. Therefore, we have to show that the results also hold for an arbitrary $x$.

	For the result of equation \eqref{limsup2}, it is now left to show that for every $\omega\in\Omega'$ and every $x\in[0,1]$ and every $\epsilon>0$ it holds true that
	\begin{align}\label{limitbetan}
	\lim\limits_{n\rightarrow\infty}\beta_n(\omega,x,\epsilon)=\lim\limits_{n\rightarrow\infty}\inf_{0\leq i\leq n-1}\left\{-J(\tau_{i}\omega,\delta(\tau_{i}\omega))\, :\, |i\lambda_n-0|< \epsilon \right\}=\beta.
	\end{align}
	Since $\beta_n(\omega,x,\epsilon)\geq\beta$ holds true for every $\omega$, $n$, $x$ and $\epsilon$ by definition, we only need to prove 
	\begin{align}
	\label{limesbeta}
	\lim\limits_{n\rightarrow\infty}\beta_n(\omega,x,\epsilon)=\lim\limits_{n\rightarrow\infty}\inf_{0\leq i\leq n-1}\left\{-J(\tau_{i}\omega,\delta(\tau_{i}\omega))\, :\, |i\lambda_n-0|< \epsilon \right\}\leq\beta.
	\end{align}
	First, notice that $\beta_n(\omega,x,\epsilon)$ is bounded, because of the boundedness of $J$ by $\Psi$ due to (LJ2). Let $\beta_n(\omega,x,\epsilon)$ be an arbitrary subsequence (not relabelled). Then, there exists a further subsequence (again not relabelled) which is convergent due to Bolzano-Weierstraß. To conclude, we show that every subsequence of that type converges to the same limit independent of $\omega$ and $x$.
	It holds true, with $I_x^{\epsilon}:=]x-\epsilon,x+\epsilon[$ and $k\in\mathbb{R}$, that
	\begin{align}
	\label{betaherleitung1}
	\begin{split}
	&\inf_{0\leq i\leq n-1}\left\{-J(\tau_{i}\omega,\delta(\tau_{i}\omega))\, :\, |i\lambda_n-x|< \epsilon \right\}\cdot\dfrac{1}{2\epsilon n}\sum_{i\in\mathbb{Z}\cap n I_x^{\epsilon}}\chi_{(-J(\tau_{i}\omega,\delta(\tau_{i}\omega))\leq k)}\\[1mm]
	&\leq\dfrac{1}{2\epsilon n}\sum_{i\in\mathbb{Z}\cap n I_x^{\epsilon}}(-J(\tau_{i}\omega,\delta(\tau_{i}\omega)))\chi_{(-J(\tau_{i}\omega,\delta(\tau_{i}\omega))\leq k)}
	\leq k\cdot\dfrac{1}{2\epsilon n}\sum_{i\in\mathbb{Z}\cap n I_x^{\epsilon}}\chi_{(-J(\tau_{i}\omega,\delta(\tau_{i}\omega))\leq k)}.
	\end{split}
	\end{align}
	From Proposition~\ref{Prop:ergodicbeta}, we get for $k\in\mathbb{Q}$ and all $x\in\mathbb{R}$
	\begin{align*}
	\dfrac{1}{2\epsilon n}\sum_{i\in\mathbb{Z}\cap n I_x^{\epsilon}}\chi_{(-J(\tau_{i}\omega,\delta(\tau_{i}\omega))\leq k)}\rightarrow\mathbb{E}\left[\chi_{(-J(\delta)\leq k)} \right]\quad\text{as}\ n\rightarrow\infty,
	\end{align*}
	independent of $x$ and $\omega$, where $J(\delta)$ represents the random variable $\omega\mapsto J(\omega,\delta(\omega))$. Since we consider a convergent subsequence of $\beta_n(\omega,x,\epsilon)$, the passage to  $n\rightarrow\infty$ in \eqref{betaherleitung1} yields for $k\in\mathbb{Q}$
	\begin{align}
	\label{betaherleitung2}
	\lim\limits_{n\rightarrow\infty}\inf_{0\leq i\leq n-1}\left\{-J(\tau_{i}\omega,\delta(\tau_{i}\omega))\, :\, |i\lambda_n-x|< \epsilon \right\}\cdot\mathbb{E}\left[\chi_{(-J(\delta)\leq k)} \right]\leq k\cdot \mathbb{E}\left[\chi_{(-J(\delta)\leq k)} \right].	
	\end{align}
	For $k>\inf\left\{-J(\omega,\delta(\omega))\, :\, \omega\in\Omega \right\}=\beta$, it holds true that
	$\mathbb{E}\left[\chi_{(-J(\delta)\leq k)} \right]=\mathbb{P}\left(\left\{-J(\delta)\leq k\right\} \right)>0$.	
	Therefore, we divide by the expectation value in \eqref{betaherleitung2} and obtain for $k>\beta$, $k\in\mathbb{Q}$
	\begin{align*}
	\lim\limits_{n\rightarrow\infty}\beta_n(\omega,x,\epsilon)=\lim\limits_{n\rightarrow\infty}\inf_{0\leq i\leq n-1}\left\{-J(\tau_{i}\omega,\delta(\tau_{i}\omega))\, :\, |i\lambda_n-x|< \epsilon \right\}\leq k.
	\end{align*}
	Further, we get for $k\in\mathbb{Q}$
	\begin{align*}
	\lim\limits_{n\rightarrow\infty}\beta_n(\omega,x,\epsilon)&=\lim\limits_{n\rightarrow\infty}\inf_{0\leq i\leq n-1}\left\{-J(\tau_{i}\omega,\delta(\tau_{i}\omega))\, :\, |i\lambda_n-x|< \epsilon \right\}\\[2mm]
	&=\liminf\limits_{k\searrow\beta }\lim\limits_{n\rightarrow\infty}\inf_{0\leq i\leq n-1}\left\{-J(\tau_{i}\omega,\delta(\tau_{i}\omega))\, :\, |i\lambda_n-x|< \epsilon \right\}\leq\lim\limits_{k\searrow\beta} k=\beta,
	\end{align*}
	which finishes the proof of \eqref{limesbeta} and therefore the proof of \eqref{limsup2}.\\
	
	Altogether, we have shown (b), namely
	\begin{align}
	\label{attouchskaliert1}
	\limsup\limits_{n\rightarrow\infty}E_n^{\gamma_n}(\omega,v_n)\leq\underline{\alpha}\int_{0}^{1}|v'(x)|^2\,\mathrm{d}x+\beta=E^{\gamma}(v).
	\end{align}
	
	\noindent
	(c) It is left to show that $v_n\rightarrow v$ in $L^1(0,1)$. For this, we split the integral as
	\begin{align}
	\label{integrale}
	\begin{split}
	&\lVert v_n-v\rVert_{L^1(0,1)} 
	=\int_{0}^{h_n^{\epsilon}\lambda_n}|v_n(x)-v(x)|\,\mathrm{d}x+\int_{h_n^{\epsilon}\lambda_n}^{T_n\lambda_n}|v_n(x)-v(x)|\,\mathrm{d}x+\int_{T_n\lambda_n}^{1}|v_n(x)-v(x)|\,\mathrm{d}x
	\end{split}
	\end{align}
	and consider each integral separately in parts (i) to (iii) below. Later, in part (iv), we combine the results from (i) to (iii) with the Attouch Lemma \cite[Corollary~1.16]{Attouch1984}. 

\medskip 	
	\noindent
	(i) For the first integral in \eqref{integrale}, we obtain
	\begin{align*}
	\int_{0}^{h_n^{\epsilon}\lambda_n}|v_n(x)-v(x)|\,\mathrm{d}x&=\int_{0}^{h_n^{\epsilon}\lambda_n}|v(0^-)-v(0^+)|\,\mathrm{d}x
	=|v(0^-)-v(0^+)|h_n^{\epsilon}\lambda_n\leq |v(0^-)-v(0^+)|\epsilon.
	\end{align*}	
	
	\noindent
	(ii) For the second integral in \eqref{integrale}, we get
	\begin{align*}
	&\int_{h_n^{\epsilon}\lambda_n}^{T_n\lambda_n}|v_n(x)-v(x)|\,\mathrm{d}x\\[2mm]
	&=\int_{(h_n^{\epsilon}+1)\lambda_n}^{T_n\lambda_n}\left|v(0^+)+\gamma_n-\gamma+z\left(T_n\lambda_n-\rho \right)-v(0^+)\right|\,\mathrm{d}x\\[2mm]
	&\hspace{10mm}+\int_{h_n^{\epsilon}\lambda_n}^{(h_n^{\epsilon}+1)\lambda_n}\left|\dfrac{v(0^+)+\gamma_n-\gamma+z\left(T_n\lambda_n-\rho \right)}{\lambda_n}\left(x-h_n^{\epsilon}\lambda_n \right)-v(0^+) \right|\,\mathrm{d}x\\[2mm]
	&=\left|\gamma_n-\gamma+z\left(T_n\lambda_n-\rho \right)\right|\left(T_n-h_n^{\epsilon}-1\right)\lambda_n\\[1mm]
	&\hspace{10mm}+\left|v(0^+)+\gamma_n-\gamma+z\left(T_n\lambda_n-\rho \right) \right|\left(2\lambda_nh_n^{\epsilon}+\frac{1}{2}\lambda_n\right)-\lambda_nv(0^+) \\[1mm]
	&\rightarrow 2v(0^+)\epsilon\quad\text{as}\quad n\rightarrow\infty,
	\end{align*}
	since $h_n^{\epsilon}\lambda_n$ is bounded by $\epsilon$, $\gamma_n\rightarrow\gamma$ and $T_n\lambda_n\rightarrow\rho$.
	
	\medskip 
	
	\noindent
	(iii) The last integral in \eqref{integrale},
	$\int_{T_n\lambda_n}^{1}|v_n(x)-v(x)|\,\mathrm{d}x$,
	is the most interesting one. With $\epsilon_{j,0}=1$ for $j=0$ and $\epsilon_{j,0}=0$ for $j>0$, we get
	\begin{align}
	\label{ieta1}
	\begin{split}
	&\int_{T_n\lambda_n}^{1}|v_n(x)-v(x)|\,\mathrm{d}x=\sum_{j=0}^{j_{\mathrm{max}}}\int_{(i_{\mathrm{min}}^{j,n}-1)\lambda_n}^{i_{\mathrm{max}}^{j,n}\lambda_n}|v_n(x)-v(x)|\,\mathrm{d}x\\[2mm]
	&=\sum_{j=0}^{j_{\mathrm{max}}}\int_{(i_{\mathrm{min}}^{j,n}-1)\lambda_n}^{i_{\mathrm{max}}^{j,n}\lambda_n}\left|\gamma_n-\gamma+z\left(T_n\lambda_n-\rho\right)\epsilon_{j,0}+ \int_{(i_{\mathrm{min}}^{j,n}-1)\lambda_n}^{x}v_n'(y)-v'(y)\,\mathrm{d}y\right|\,\mathrm{d}x\\[2mm]
	&\leq\sum_{j=0}^{j_{\mathrm{max}}}\int_{(i_{\mathrm{min}}^{j,n}-1)\lambda_n}^{i_{\mathrm{max}}^{j,n}\lambda_n}\int_{(i_{\mathrm{min}}^{j,n}-1)\lambda_n}^{i_{\mathrm{max}}^{j,n}\lambda_n}|v_n'(y)-v'(y)|\,\mathrm{d}y\,\mathrm{d}x+\sum_{j=0}^{j_{\mathrm{max}}}\int_{(i_{\mathrm{min}}^{j,n}-1)\lambda_n}^{i_{\mathrm{max}}^{j,n}\lambda_n}\left|\gamma_n-\gamma+z\left(T_n\lambda_n-\rho\right)\epsilon_{j,0}\right|\,\mathrm{d}x\\[2mm]
	&\leq\sum_{j=0}^{j_{\mathrm{max}}}\left(i_{\mathrm{max}}^{j,n}-i_{\mathrm{min}}^{j,n}+1\right)\lambda_n\int_{(i_{\mathrm{min}}^{j,n}-1)\lambda_n}^{i_{\mathrm{max}}^{j,n}\lambda_n}|v_n'(x)-v'(x)|\,\mathrm{d}x+\dfrac{1-\rho}{\mu}\left(\mu+\lambda_n\right)\left(|\gamma_n-\gamma|+z\lambda_n \right)\\[2mm]
	&\leq \sum_{j=0}^{j_{\mathrm{max}}}(\mu+\lambda_n)\int_{(i_{\mathrm{min}}^{j,n}-1)\lambda_n}^{i_{\mathrm{max}}^{j,n}\lambda_n}|v_n'(x)-v'(x)|\,\mathrm{d}x+\dfrac{1-\rho}{\mu}\left(|\gamma_n-\gamma|+z\lambda_n \right),
	\end{split}
	\end{align}
	because we have $\rho-T_n\lambda_n\leq\lambda_n$ and $\lambda_n\left(i_{\mathrm{max}}^{j,n}-i_{\mathrm{min}}^{j,n}+1 \right)\leq\mu+\lambda_n$. The integral in the last row of \eqref{ieta1} is considered separately. For $j>0$, we have $v'(x)=z$ and therefore get	
	\begin{align}
	\label{ieta2}
	\begin{split}
	&\int_{(i_{\mathrm{min}}^{j,n}-1)\lambda_n}^{i_{\mathrm{max}}^{j,n}\lambda_n}|v_n'(x)-v'(x)|\,\mathrm{d}x\\[1mm]
	&=\sum_{i\in I_{j,n}^{\mu*}}\int_{i\lambda_n}^{(i+1)\lambda_n}|v_n'(x)-v'(x)|\,\mathrm{d}x=\sum_{i\in I_{j,n}^{\mu*}}\lambda_n\left|\left(\dfrac{1}{\left|I_{j,n}^{\mu*} \right|}\sum_{k\in I_{j,n}^{\mu*}}\dfrac{1}{\alpha(\tau_k\omega)}\right)^{-1}\hspace{-4mm}z\dfrac{1}{\alpha(\tau_i\omega)}-z \right|\\[2mm]
	&\leq\lambda_n|z|\sum_{i\in I_{j,n}^{\mu*}}\left(\left(\dfrac{1}{\left|I_{j,n}^{\mu*} \right|}\sum_{k\in I_{j,n}^{\mu*}}\dfrac{1}{\alpha(\tau_k\omega)} \right)^{-1}\hspace{-4mm}\dfrac{1}{\alpha(\tau_i\omega)}+1\right)\\[2mm]
	&\leq\lambda_n|z|\left(\left(\dfrac{1}{\left|I_{j,n}^{\mu*} \right|}\sum_{k\in I_{j,n}^{\mu*}}\dfrac{1}{\alpha(\tau_k\omega)} \right)^{-1}\cdot\left(\sum_{i\in I_{j,n}^{\mu*}}\dfrac{1}{\alpha(\tau_i\omega)}\right)+\sum_{i\in I_{j,n}^{\mu*}}1\right)\\[1mm]
	&=2|z|\lambda_n\left|I_{j,n}^{\mu*} \right| \leq C\cdot\left(\mu+\lambda_n\right),
	\end{split}
	\end{align}
	since it holds true that $\lambda_n\left|I_{j,n}^{\mu*} \right| =\lambda_n\left(i_{\mathrm{max}}^{j,n}-i_{\mathrm{min}}^{j,n}+1 \right)\leq\mu+\lambda_n$. Now that we have determined the integral in the last row of \eqref{ieta1} for $j>0$, we calculate it for $j=0$ by
	\begin{align}
	\label{ietazwischen}
	\begin{split}
	&\int_{T_n\lambda_n}^{i_{\mathrm{max}}^{0,n}\lambda_n}|v_n'(x)-v'(x)|\,\mathrm{d}x\\[1mm]
	&=\int_{T_n\lambda_n}^{\rho}|v_n'(x)-v'(x)|\,\mathrm{d}x+\int_{\rho}^{(T_n+1)\lambda_n}|v_n'(x)-v'(x)|\,\mathrm{d}x+\int_{(T_n+1)\lambda_n}^{i_{\mathrm{max}}^{0,n}\lambda_n}|v_n'(x)-v'(x)|\,\mathrm{d}x.
	\end{split}
	\end{align}
	The first addend of \eqref{ietazwischen} is
	\begin{align*}
	\int_{T_n\lambda_n}^{\rho}|v_n'(x)-v'(x)|\,\mathrm{d}x&=\int_{T_n\lambda_n}^{\rho}\left|\left(\dfrac{1}{\left|I_{j,n}^{\mu*} \right|}\sum_{k\in I_{j,n}^{\mu*}}\dfrac{1}{\alpha(\tau_k\omega)}\right)^{-1}\hspace{-4mm}z\dfrac{1}{\alpha\left(\tau_{T_n}\omega\right)}-0 \right|\,\mathrm{d}x\\[1mm]
	&=\left(\rho-T_n\lambda_n\right)\left(\dfrac{1}{\left|I_{j,n}^{\mu*} \right|}\sum_{k\in I_{j,n}^{\mu*}}\dfrac{1}{\alpha(\tau_k\omega)}\right)^{-1}\hspace{-4mm}|z|\dfrac{1}{\alpha\left(\tau_{T_n}\omega\right)},
	\end{align*}
	which converges to zero as $n\to\infty$ because of the convergence $T_n\lambda_n\to\rho$, Proposition~\ref{Prop:averages2} and the boundedness of $\alpha^{-1}(\omega)$ due to Remark~\ref{Rm:integrabilityandexpectationvalues2}. The second addend of \eqref{ietazwischen} is
	\begin{align*}
	\int_{\rho}^{(T_n+1)\lambda_n}|v_n'(x)-v'(x)|\,\mathrm{d}x&=\int_{\rho}^{(T_n+1)\lambda_n}\left|\left(\dfrac{1}{\left|I_{j,n}^{\mu*} \right|}\sum_{k\in I_{j,n}^{\mu*}}\dfrac{1}{\alpha(\tau_k\omega)}\right)^{-1}\hspace{-4mm}z\dfrac{1}{\alpha\left(\tau_{T_n}\omega\right)}-z \right|\,\mathrm{d}x\\[2mm]
	&=\left((T_n+1)\lambda_n-\rho\right)|z|\left|\left(\dfrac{1}{\left|I_{j,n}^{\mu*} \right|}\sum_{k\in I_{j,n}^{\mu*}}\dfrac{1}{\alpha(\tau_k\omega)}\right)^{-1}\hspace{-4mm}\dfrac{1}{\alpha\left(\tau_{T_n}\omega\right)}-1\right|,
	\end{align*}
	which again converges to zero as $n\to\infty$ because of the convergence $(T_n+1)\lambda_n\to\rho$, Proposition~\ref{Prop:averages2} and the boundedness of $\alpha^{-1}(\omega)$ due to Remark~\ref{Rm:integrabilityandexpectationvalues2}. For the last addend of \eqref{ietazwischen}, we reuse the calculations from \eqref{ieta2} and get
	\begin{align*}
	&\int_{(T_n+1)\lambda_n}^{i_{\mathrm{max}}^{0,n}\lambda_n}|v_n'(x)-v'(x)|\,\mathrm{d}x=\sum_{i\in I_{j,n}^{\mu*}\setminus\{T_n\}}\int_{i\lambda_n}^{(i+1)\lambda_n}|v_n'(x)-v'(x)|\,\mathrm{d}x\\[1mm]
	&=\sum_{i\in I_{j,n}^{\mu*}}\lambda_n\left|\left(\dfrac{1}{\left|I_{j,n}^{\mu*} \right|}\sum_{k\in I_{j,n}^{\mu*}}\dfrac{1}{\alpha(\tau_k\omega)}\right)^{-1}\hspace{-4mm}z\dfrac{1}{\alpha(\tau_i\omega)}-z \right|-\lambda_n\left|\left(\dfrac{1}{\left|I_{j,n}^{\mu*} \right|}\sum_{k\in I_{j,n}^{\mu*}}\dfrac{1}{\alpha(\tau_k\omega)}\right)^{-1}\hspace{-4mm}z\dfrac{1}{\alpha\left(\tau_{T_n}\omega\right)}-z \right|\\[2mm]
	&\leq2|z|\lambda_n\left|I_{j,n}^{\mu*} \right|+\hat{C}\lambda_n \leq C\left(\mu+\lambda_n\right),
	\end{align*}
	where the bound $\hat{C}$ is due to the boundedness of $\alpha^{-1}(\omega)$ by Remark~\ref{Rm:integrabilityandexpectationvalues2} and the convergence of the sum according to Proposition~\ref{Prop:averages2}. Altogether, this yields, for \eqref{ietazwischen} and for $n$ large enough,
	\begin{align}
	\label{ietaj0}
	\int_{T_n\lambda_n}^{i_{\mathrm{max}}^{0,n}\lambda_n}|v_n'(x)-v'(x)|\,\mathrm{d}x\leq C\left(\mu+\lambda_n \right).
	\end{align}
	Combining \eqref{ieta1}, \eqref{ieta2} and \eqref{ietaj0}, we obtain
	\begin{align*}
	\begin{split}
	&\int_{T_n\lambda_n}^{1}|v_n(x)-v(x)|\,\mathrm{d}x\leq\sum_{j=0}^{j_{\mathrm{max}}}(\mu+\lambda_n) C(\mu+\lambda_n)+\dfrac{1-\rho}{\mu}\left(|\gamma_n-\gamma|+z\lambda_n \right)\\[1mm]
	&=\frac{1-\rho}{\mu}(\mu+\lambda_n) C (\mu+\lambda_n)+\dfrac{1-\rho}{\mu}\left(|\gamma_n-\gamma|+z\lambda_n \right)\\[1mm]
	&\leq\tilde{C}\left(\mu+2\lambda_n+\frac{\lambda_n^2}{\mu} \right)+\dfrac{1-\rho}{\mu}\left(|\gamma_n-\gamma|+z\lambda_n \right)\to \tilde{C}\mu \quad\text{as}\ n\to\infty.
	\end{split}
	\end{align*} 	
	
	\noindent
	(iv) Altogether, we have shown in the parts (i)--(iii) that
	\begin{align}
	\label{attouchskaliert2}
	\limsup\limits_{n\to\infty}\lVert v_n-v\rVert_{L^1(0,1)}\leq \hat{C}\epsilon+\tilde{C}\mu.
	\end{align}
	Now, by setting  $\epsilon=\mu$, we combine the results from \eqref{attouchskaliert1} and \eqref{attouchskaliert2} and get (recall that $v_n$ strictly accurately is $v_{n,\mu}$)
	\begin{align*}
	\limsup\limits_{\mu\rightarrow0}\limsup\limits_{n\rightarrow\infty}\bigg(|E_n^{\gamma_n}(\omega,v_{n,\mu})-E^{\gamma}(v)|+\lVert v_{n,\mu}-v\lVert_{L^1(0,1)} \bigg)=0.
	\end{align*}
	From the Attouch Lemma \cite[Corollary~1.16]{Attouch1984}, we therefore get the existence of a subsequence $\mu_n$ with $\mu_n\to0$ as $n\to\infty$ and
	\begin{align*}
	0&\leq\limsup_{n\to\infty}\left(|E_n^{\gamma_n}(\omega,v_{n,\mu_n})-E^{\gamma}(v)|+\lVert v_{n,\mu_n}-v\lVert_{L^1(0,1)} \right)\\[1mm]
	&\leq \limsup\limits_{\mu\rightarrow0}\limsup\limits_{n\rightarrow\infty}\left(|E_n^{\gamma_n}(\omega,v_{n,\mu})-E^{\gamma}(v)|+\lVert v_{n,\mu}-v\lVert_{L^1(0,1)} \right)=0.
	\end{align*}
	Finally, this proves $\lVert v_{n,\mu_n}-v\rVert_{L^1(0,1)}\to 0$ as $n\to\infty$, which concludes (c). Hence, $(v_{n,\mu_n})$ is the recovery sequence for the affine function $v(x)=zx$, which was the goal of step A.
	
	This construction of a recovery sequence for affine functions with a jump can easily be extended to piecewise affine functions with jumps by dividing the interval $[0,1]$ into parts where the function is affine.
	
	\medskip 
	
	\noindent
	\textit{Step B: Smooth functions, constant near the jump.}
	
	We have constructed a recovery sequence for piecewise affine functions with jumps. With this result, we get a recovery sequence for every $v\in C^2([0,1]\setminus S_{v})$ where $v$ is constant on $x\in[x_0-\eta,x_0+\eta]$ with $S_v=\{x_0\}$ and $\eta>0$ small enough. This is justified as follows: on $x\in[x_0-\eta,x_0+\eta]$, $v$ is affine. On $[0,x_0-\eta]$ and $[x_0+\eta,1]$, we take, for $\delta>0$, the piecewise affine interpolation $v_N$ of $v$ with grid points $(t_j^{N})_{j=0,\ldots,a_N,b_N,\ldots,j_N}$ with $t_0=0$, $t_{a_N}=x_0-\eta$, $t_{b_N}=x_0+\eta$, $t_{j_N}=1$ and $\delta<t_{j+1}^N-t_j^{N}<2\delta$ for $j=0,\ldots,a_{N}-1,b_N,\ldots,j_N$. Note that for $\delta\to0$ we also get $N\to\infty$, which is the reason why we use both equivalently. Then, we get by the Jensen inequality
	\begin{align}
	\label{edeltavn}
	\begin{split}
	E^{\gamma}(v)&=\underline{\alpha}\int_{0}^{1}|v'(x)|^2\,\mathrm{d}x+\beta 
	=\underline{\alpha}\sum_{j=0}^{N}\left(t_{j}^N-t_{j-1}^N\right)\dfrac{1}{t_{j}^N-t_{j-1}^N}\int_{t_{j-1}^N}^{t_j^N}|v'(x)|^2\,\mathrm{d}x+\beta\\[1mm]
	&\geq\underline{\alpha}\sum_{j=0}^{N}\left(t_{j}^N-t_{j-1}^N\right)\left|\dfrac{1}{t_{j}^N-t_{j-1}^N}\int_{t_{j-1}^N}^{t_j^N}v'(x)\,\mathrm{d}x\right|^2+\beta \\[2mm]
	&=\underline{\alpha}\sum_{j=0}^{N}\left(t_{j}^N-t_{j-1}^N\right)\left|\dfrac{v(t_j^N)-v(t_{j-1}^N)}{t_{j}^N-t_{j-1}^N}\right|^2+\beta\\[1mm]
	&=\underline{\alpha}\sum_{j=0}^{N}\int_{t_{j-1}^N}^{t_j^N}|v_N'(x)|^2\,\mathrm{d}x+\beta=\underline{\alpha}\int_{0}^{1}|v_N'(x)|^2\,\mathrm{d}x+\beta=E^{\gamma}(v_N).
	\end{split}
	\end{align}
	We argue as in step E of the proof of the liminf inequality in \cite[Theorem~3.1]{unserPaper1} to get $v_N\rightarrow v$ in $L^1(0,1)$. Further, the $\Gamma$-$\limsup$ is lower semicontinuous. Therefore, we get
	\begin{align*}
	\Gamma\text{-}\limsup_{n\rightarrow\infty}E_n^{\gamma_n}(\omega,v)\stackrel{l.s.c.}{\leq}\liminf_{N\rightarrow \infty}\left(\Gamma\text{-}\limsup_{n\rightarrow\infty}E_n^{\gamma_n}(\omega,v_N) \right)\stackrel{(*)}{\leq}\liminf_{N\rightarrow \infty}E^{\gamma}(v_N)\stackrel{\eqref{edeltavn}}{\leq}E^{\gamma}(v),
	\end{align*}
	where $(*)$ follows from the construction of the recovery sequence for piecewise affine functions in step A, which keeps the boundary values, see~\cite[Section 2.4, Corollary 3]{Burenkov}.\\
	
	\noindent
	\textit{Step C: Smooth functions.}
	
	Now that we have a recovery sequence for $v\in C^2([0,1]\setminus S_{v})$ where $v$ is constant on $x\in[x_0-\eta,x_0+\eta]$, we can extend it to functions $v=v_c+v_j$ with $v_c\in C^2[0,1]$ and $v_j$ is piecewise constant, which concludes the limsup-inequality. Without loss of generality, we set $S_v=\{x_0\}$. Now, we define, for $\eta>0$ small enough, an approximation $v_c^{\eta}$ with
	\begin{align*}
	v_c^{\eta}(x):=\begin{cases}
	v_c(x)&\quad\text{for}\quad x<x_0-\eta,\\[1mm]
	v_c(x_0-\eta)&\quad\text{for}\quad x\in [x_0-\eta,x_0+\eta],\\[1mm]
	v_c(x)-v_c(x_0+\eta)+v_c(x_0-\eta)&\quad\text{for}\quad x>x_0+\eta.
	\end{cases}
	\end{align*}
	Then, $v^{\eta}=v_c^{\eta}+v_j$ has two properties, namely (a) $v^{\eta}\rightarrow v$ in $L^1(0,1)$ for $\eta\rightarrow0$ and (b) $\int_{0}^{1}|{v_c^{\eta}}'(x)|^2\,\mathrm{d}x\rightarrow\int_{0}^{1}|v_c'(x)|^2\,\mathrm{d}x$ for $\eta\rightarrow0$, which can be seen as follows.
	
	\medskip
	
	\noindent
	(a) 
	Recalling that $v_c\in C^2[0,1]$, we deduce
	\begin{align*}
	&\int_{0}^{1}|v^{\eta}(x)-v(x)|\,\mathrm{d}x=\int_{x_0-\eta}^{x_0+\eta}|v_c(x_0-\eta)-v_c(x)|\,\mathrm{d}x+\int_{x_0+\eta}^{1}|v_c(x_0-\eta)-v_c(x_0+\eta)|\,\mathrm{d}x\\[2mm]
	&\leq 2\eta|v_c(x_0-\eta)|+\int_{x_0-\eta}^{x_0+\eta}|v_c(x)|\,\mathrm{d}x+(1-x_0-\eta)|v_c(x_0-\eta)-v_c(x_0+\eta)|\\[2mm]
	&\rightarrow 2\cdot0\cdot |v_c(x_0)|+0+(1-x_0)\cdot|v_c(x_0)-v_c(x_0)|=0\quad\text{for}\ \eta\rightarrow0.
	\end{align*}
	
	\noindent
	(b) We observe that 
	\begin{align*}
	\int_{0}^{1}|{v_c^{\eta}}'(x)|^2\,\mathrm{d}x&=\int_{0}^{x_0-\eta}|v_c'(x)|^2\,\mathrm{d}x+\int_{x_0+\eta}^{1}|v_c'(x)|^2\,\mathrm{d}x\rightarrow\int_{0}^{1}|v_c'(x)|^2\,\mathrm{d}x\quad\text{for}\ \eta\rightarrow0.
	\end{align*}

	Similarly to standard density arguments, we get with the properties (a) and (b)
	\begin{align*}
	\Gamma\text{-}\limsup_{n\rightarrow\infty}E_n^{\gamma_n}(\omega,v)\stackrel{\text{(a)+l.s.c.}}{\leq}\liminf_{\eta\rightarrow 0}\left(\Gamma\text{-}\limsup_{n\rightarrow\infty}E_n^{\gamma_n}(\omega,v^{\eta}) \right)\stackrel{(*)}{\leq}\liminf_{\eta\rightarrow 0}E^{\gamma}(v^{\eta})\stackrel{\text{(b)}}{\leq}E^{\gamma}(v),
	\end{align*}
	where $(*)$ follows from the construction of the recovery sequence from step B, which also keeps the boundary values. As was already discussed in the beginning of the limsup-part, the construction of the recovery sequence is now completed also for an arbitrary $v\in SBV_c^{\gamma}$.
	
	\medskip

	\noindent
	\textbf{Step 3.} Convergence of minimum problems.
	
	The convergence of minimum problems follows from the coerciveness of $E_n^{\gamma_n}$ and the $\Gamma$-convergence result due to the main theorem of $\Gamma$-convergence. It is left to show that
	\begin{align}
	\label{minimumproblemskaliert}
	\min_v E^{\gamma}(v)=\min\{\underline{\alpha}\gamma^2,\beta\}.
	\end{align}
	This is done analogously to \cite{ScardiaSchloemerkemperZanini2012}. For $\gamma>0$ fixed and $v$ with boundary conditions $v(0)=0$ and $v(1)=\gamma$ and fulfilling $[v]>0$ on $S_v$ we have to distinguish two cases: First, let $S_v=\emptyset$, then we have $v\in W^{1,1}(0,1)$ and with the Jensen-inequality we get
	\begin{align*}
	\underline{\alpha}\int_{0}^{1}|v'(x)|^2\,\mathrm{d}x \geq \underline{\alpha}\left|\int_{0}^{1}v'(x)\right|^2=\underline{\alpha}\gamma^2	
	\end{align*}
	and therefore with the minimizer $v(x)=\gamma x$
	\begin{align*}
	\min_v E^{\gamma}(v)=\min_v\left\{\underline{\alpha}\int_{0}^{1}|v'(x)|^2\,\mathrm{d}x \right\}=\underline{\alpha}\gamma^2.
	\end{align*}
	Second, for $S_v\neq\emptyset$, we get because of $\underline{\alpha}>0$
	\begin{align*}
	\min_v E^{\gamma}(v)=\min_v\left\{\underline{\alpha}\int_{0}^{1}|v'(x)|^2\,\mathrm{d}x+\beta\#S_v \right\}=\beta
	\end{align*}
	where the minimizer has one jump point $S_v=\{t\}$ for some $t\in [0,1]$ and is given by
	\begin{align*}
	v(x)=\begin{cases}
	0&\quad\text{if}\ x\in[0,t),\\
	\gamma&\quad\text{if}\ x\in[t,1].
	\end{cases}
	\end{align*}
	This shows \eqref{minimumproblemskaliert} and completes the proof of Theorem~\ref{Thm:rescaled}.
\end{proof}

\textbf{Acknowledgments.} The largest part of this work was performed while LL was affiliated with the Institute of Mathematics, University of Würzburg. LL gratefully acknowledges the kind hospitality of the Technische Universität Dresden during her research visits, which were partial funded by the Deutsche Forschungsgemeinschaft
(DFG, German Research Foundation) – within project 405009441 and TU Dresden’s
Institutional Strategy “The Synergetic University”. AS would like to thank the Isaac Newton Institute for Mathematical Sciences for support and hospitality during the programme
``The Mathematical Design of New Materials''
when some work on this paper was undertaken. This programme was supported by EPSRC grant number EP/R014604/1.

\end{document}